\theoremstyle{definition} \newtheorem*{definition}{Definition}
\theoremstyle{definition} \newtheorem{hypothesis}{Hypothesis}
\theoremstyle{definition} \newtheorem{remark}{Remark}
\theoremstyle{definition} \newtheorem{example}{Example}
\theoremstyle{theorem}    \newtheorem{theorem}{Theorem}
                          \newtheorem{corollary}[theorem]{Corollary}
                          \newtheorem{lemma}[theorem]{Lemma}
\begin{document}

\title{Semilinear Stochastic Evolution Equations with L\'evy Noise and Monotone Nonlinearity}
\author{Erfan Salavati, Bijan Z. Zangeneh\\ \\
Department of Mathematics, Sharif University of Technology\\
Tehran, Iran}
\date{}

\maketitle

\begin{abstract}
Semilinear stochastic evolution equations with multiplicative L\'evy noise and monotone nonlinear drift are considered. Unlike other similar work we do not impose coercivity conditions on coefficients. Existence and uniqueness of the mild solution is proved using an iterative method. The continuity of the solution with respect to initial conditions and coefficients is proved and a sufficient condition for exponential asymptotic stability of the solutions has been derived. The solutions are proved to have a Markov property. Examples on stochastic partial differential equations and stochastic delay equations are provided to demonstrate the theory developed. The main tool in our study is an It\^o type inequality which gives a pathwise bound for the norm of stochastic convolution integrals.
\end{abstract}

\vspace{2mm}
\noindent{Mathematics Subject Classification: 60H10, 60H15, 60G51, 47H05, 47J35.}
\vspace{2mm}

\noindent{Keywords: Stochastic Evolution Equation, Monotone Operator, L\'evy Noise, It\"o type inequality, Stochastic Convolution Integral.}

\section{Introduction}\label{section: introduction}

Stochastic evolution equations have been an active area of research for many years. There are two main approaches in the study of nonlinear stochastic evolution equations. First studies equations of type
\begin{equation*}
    dX_t=AX_t dt + f(X_t) dt + g(X_t)d W_t
\end{equation*}
in a Hilbert space where $A$ is the infinitesimal generator of a $C_0$ semigroup of linear operators, $W_t$ is a Wiener process or more generally a martingale and $f$ and $g$ are assumed to be Lipschitz. Among studies taking this approach one can note Da Prato and Zabczyk~\cite{DaPrato_Zabczyk_book}, in which the existence and uniqueness of the mild solution for stochastic evolution equations with Wiener noise is proved as well as Kotelenez~\cite{Kotelenez-1984} in which the general martingale noise is considered.

The second approach considers equations of type
    \[ dX_t = F(X_t)dt + G(X_t)dW_t \]
in a Hilbert space $H$ equipped with a Banach space $B$ with dense embeddings  $B \subset H \subset B^*$, where $W_t$ is a Wiener process with values in a Hilbert space and $F$ and $G$ are generally assumed to be unbounded nonlinear operators that satisfy certain monotonicity and coercivity properties. This approach is called the variational method. Among studies in this framework one can note Pardoux~\cite{Pardoux}, Krylov and Rozovskii~\cite{Krylov-Rozovskii}, Liu and R\"ockner~\cite{Liu-Rockner} and R\"ockner~\cite{Rockner}. As one typical example of SPDE with this approach, the stochastic porous media equation has been studied by Barbu, Da Prato and R\"ockner~\cite{Barbu-DaPrato-Rockner-1,Barbu-DaPrato-Rockner-2}.

Each of these two approaches are stochastic versions of well known deterministic methods in nonlinear analysis. Another deterministic method is the semigroup approach to semilinear evolution equations with monotone nonlinearities, and it first appeared in the works of Browder~\cite{Browder} and Kato~\cite{Kato}. This approach has been generalized to stochastic evolution equations in Zangeneh~\cite{Zangeneh-Thesis} and~\cite{Zangeneh-Paper} to study equations of the type
\begin{equation}\label{equation: wiener_noise}
    dX_t=AX_t dt + f(X_t) dt + g(X_t)d W_t,
\end{equation}
where $W_t$ is a Wiener process and $f$ has a monotonicity assumption, i.e. there exists a real constant $M$ such that $\langle f(x)-f(y) , x-y \rangle \le M \|x-y\|^2$. We call such $f$ a \emph{semimonotone} operator. In the case that $M=0$, $f$ is called a \emph{monotone} operator. Monotone operators are also called \emph{dissipative} operators in the literature and they are generalizations of decreasing real functions. Every operator of the form $f=g+h$ where $g$ is monotone and $h$ is Lipschitz, is a semimonotone operator and vice versa. Hence this approach is a generalization of the first approach. This generalization is useful since there are natural semimonotone functions which are not Lipschitz; examples include decreasing real functions, such as $-\sqrt[3]{x}$, or sum of a non differentiable decreasing function with a Lipschitz function. Figure~\ref{figure:semimonotone} shows a semimonotone real function.

\begin{figure}[ht] \label{figure:semimonotone}
		\centering
		\includegraphics[scale=.5]{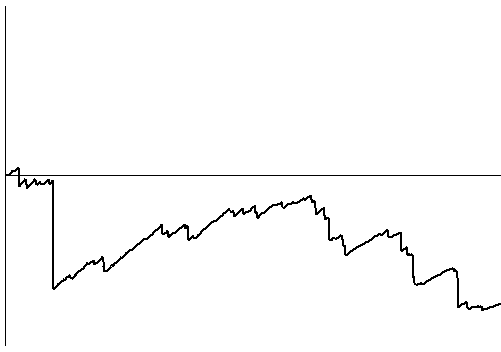}
		\caption{A semimonotone function}
\end{figure}

The semigroup approach to semilinear stochastic evolution equations with monotone nonlinearities also has an advantage to the variational method since it does not require the coercivity property. There are important examples, such as stochastic partial differential equations of hyperbolic type with monotone nonlinear terms, for which the generator does not satisfy the coercivity property and hence the variational method is not applicable directly to these equations. Pardoux~\cite{Pardoux} has developed a new theory to apply the variational method to the second order hyperbolic equations. But as is shown in Example~\ref{example: second_order_hyperbolic}, this problem can be treated directly in our setting. Another advantage of the semigroup approach to semilinear stochastic evolution equations with monotone nonlinearities is that it allows a unified treatment of different problems, such as stochastic partial differential equations of hyperbolic and parabolic type and stochastic delay differential equations.

There are a number of papers that have considered monotone (dissipative) nonlinearities but they assume the special case of additive white noise, see e.g Da Prato and R\"ockner~\cite{Da Prato-Rockner-dissipative, Da Prato-Rockner-singular}. The general case of multiplicative white noise has been considered by Zangeneh~\cite{Zangeneh-Paper} and existence and uniqueness of the mild solution has been proved.

Jahanipur and Zangeneh~\cite{Jahanipour-Zangeneh} has derived sufficient conditions for exponential asymptotic stability of solutions of~\eqref{equation: wiener_noise}. Jahanipur~\cite{Jahanipour-delay} considered stochastic delay evolution equations and proved existence and stability of mild solutions. Jahanipur~\cite{Jahanipur-functional-stability} generalized the results to stochastic functional evolution equations with coefficients depending on the past path of the solution. Hamedani and Zangeneh~\cite{Hamedani-Zangeneh-existence} considered a stopped version of~\eqref{equation: wiener_noise} and proved existence and uniqueness of the solution using a stopped maximal inequality for $p$-th moment of stochastic convolution integrals, which they proved in~\cite{Hamedani-Zangeneh-stopped}. Dadashi and Zangeneh~\cite{Dadashi-Zangeneh} studied the large deviation principle for~\eqref{equation: wiener_noise}. Zamani and Zangeneh~\cite{Zamani-Zangeneh-random-motion} studied a limiting problem of such equations arising from random motion of highly elastic strings. Finally, Zangeneh~\cite{Zangeneh_Nualart} studied the stationarity of a mild solution to a stochastic evolution equation with monotone nonlinear drift.

In recent years some research has appeared on stochastic evolution equations with L\'evy noise which use the first and second approaches above, see e.g. Peszat and Zabczyk~\cite{Peszat-Zabczyk}, Albeverio, Mandrekar and R\"udiger~\cite{Albeverio-Mandrekar-Rudiger-2009} and Marinelli, Pr\'ev\^ot and R\"ockner~\cite{Marinelli-Prevot-Rockner} for the case of Lipschitz coefficients and Brze\'zniak, Liu and Zhu~\cite{Brzezniak-Liu-Zhu} for coercive and monotone coefficients. There are a number of works that have considered monotone (dissipative) coefficients but they assume the special case of additive L\'evy noise, see e.g Peszat and Zabczyk~\cite{Peszat-Zabczyk} and Albeverio, Mastrogiacomo and Smii~\cite{Albeverio-small-noise}. We should also mention the article by Marinelli and R\"ockner~\cite{Marinelli-Rockner} which considers dissipative nonlinear drift and multiplicative L\'evy noise but only the uniqueness of the mild solution is proved there. As far as we know, the existence and continuity of the mild solution with respect to initial conditions for equations with monotone nonlinear drift and multiplicative L\'evy noise has not been studied before.

In this article we are concerned with the equation
\[    dX_t=AX_t dt+f(t,X_t) dt + g(t,X_{t-})d W_t + \int_E k(t,\xi,X_{t-}) \tilde{N}(dt,d\xi),\]
where $W_t$ is a cylindrical Wiener process on a Hilbert space $K$ and $\tilde{N}(dt,d\xi)$ is a compensated Poisson random measure. We assume $f$ is semimonotone and $g$ and $k$ are Lipschitz. In section~\ref{section: Problem} the assumptions on coefficients are stated precisely. We prove the existence and uniqueness of the mild solution in Theorem~\ref{theorem:existence and uniqueness} in section~\ref{section: Existence and Uniqueness}. The continuity of the solution with respect to initial conditions and coefficients will be proved in Theorem~\ref{theorem: continuity I} in section~\ref{section: Continuity With Respect to Parameter}  and a sufficient condition for the exponential stability of the solutions will be derived in Corollary~\ref{theorem: Exponential Stability}. Section~\ref{section: Markov Property} is devoted to proving the Markov property of the solutions. Some of the statements have been published previously in~\cite{Proceedings}, but the proofs were just outlined.

Our method in proving the existence of the solution is a certain iterative method that is specific to this type of equations. The main tool in our study is an It\^o type inequality that gives a pathwise bound for the norm of stochastic convolution integrals, which has been proved in~\cite{Zangeneh-Paper} and will be stated in section~\ref{section: Stochastic Convolution Integrals}. Since the usual inequalities for stochastic convolution integrals are not applicable to our equation, the so called inequality plays a central role in our study.

In the last section we will provide some concrete examples that our results could apply. The examples consist of a stochastic delay differential equation and three semilinear stochastic partial differential equations.

We will use the notion of stochastic integration with respect to cylindrical Wiener process and compensated Poisson random measure.
For the definition and properties see~\cite{Peszat-Zabczyk} and~\cite{Albeverio-Mandrekar-Rudiger-2009}.

\section{The Problem}\label{section: Problem}

Let $H$ be a separable Hilbert space with inner product $\langle \, , \, \rangle$. Let $S_t$ be a $C_0$ semigroup on $H$ with infinitesimal generator $A:D(A)\to H$. Furthermore we assume the exponential growth condition on $S_t$, i.e. there exists a constant $\alpha$ such that $\| S_t \| \le e^{\alpha t}$. If $\alpha=0$, $S_t$ is called a contraction semigroup. We denote by $L_{HS}(K,H)$ the space of Hilbert-Schmidt mappings from a Hilbert space $K$ to $H$.

\begin{definition}
    $f:H\to H$ is called \emph{demicontinuous} if whenever $x_n \to x$, strongly in $H$ then $f(x_n)\rightharpoonup f(x)$ weakly in $H$.
\end{definition}

Let $(\Omega,\mathcal{F},\mathcal{F}_t,\mathbb{P})$ be a filtered probability space. Let $(E,\mathcal{E})$ be a measurable space and $N(dt,d\xi)$ a Poisson random measure on $\mathbb{R}^+ \times E$ with intensity measure $dt \nu(d\xi)$. Our goal is to study the following equation in $H$,
\begin{equation}\label{main_equation}
    dX_t=AX_t dt+f(t,X_t) dt + g(t,X_{t-})d W_t + \int_E k(t,\xi,X_{t-}) \tilde{N}(dt,d\xi),
\end{equation}
where $W_t$ is a cylindrical Wiener process on a Hilbert space $K$ and $\tilde{N}(dt,d\xi)=N(dt,d\xi)-dt\nu(d\xi)$ is the compensated Poisson random measure corresponding to $N$. We assume that $N$ and $W_t$ are independent. We also assume the following,

\begin{hypothesis}\label{main_hypothesis}
    \begin{description}

        \item[(a)] $f(t,x,\omega):\mathbb{R}^+\times H\times \Omega \to H$ is measurable, $\mathcal{F}_t$-adapted, demicontinuous with respect to $x$ and there exists a constant $M$ such that
            \[ \langle f(t,x,\omega)-f(t,y,\omega),x-y \rangle \le M \|x-y\|^2,\]

        \item[(b)] $g(t,x,\omega):\mathbb{R}^+\times H\times \Omega \to L_{HS}(K,H)$ and $k(t,\xi,x,\omega):\mathbb{R}^+\times E\times H\times \Omega \to H$ are predictable and there exists a constant $C$ such that
            \[ \| g(t,x,\omega)-g(t,y,\omega)\|_{L_{HS}(K,H)}^2 + \int_{E}\|k(t,\xi,x)-k(t,\xi,y)\|^2 \nu(d\xi) \le C \|x-y \|^2,\]

        \item[(c)] There exists a constant $D$ such that
            \[ \| f(t,x,\omega)\|^2 + \| g(t,x,\omega)\|_{L_{HS}(K,H)}^2 + \int_{E}\|k(t,\xi,x)\|^2 \nu(d\xi) \le D(1+\|x\|^2),\]
        \item[(d)] $X_0(\omega)$ is $\mathcal{F}_0$ measurable and square integrable.
    \end{description}

\end{hypothesis}

\begin{definition}
    By a \emph{mild solution} of equation~\eqref{main_equation} with initial condition $X_0$ we mean an adapted c\`adl\`ag process $X_t$ that satisfies
    \begin{multline}\label{mild_solution}
        X_t=S_t X_0+\int_0^t S_{t-s}f(s,X_s) ds+\int_0^t{S_{t-s}g(s,X_{s-})d W_s}\\
        + \int_0^t{\int_E {S_{t-s}k(s,\xi,X_{s-})} \tilde{N}(ds,d\xi).}
    \end{multline}
\end{definition}

\section{Stochastic Convolution Integrals}\label{section: Stochastic Convolution Integrals}

In this section we review some properties and results about convolution integrals of type $X_t=\int_0^t S_{t-s} dM_s$ where $M_t$ is a martingale. These are called stochastic convolution integrals. Kotelenez~\cite{Kotelenez-1982} has proved that stochastic convolution integrals always have a c\`adl\`ag version. Kotelenez~\cite{Kotelenez-1984} also gives a maximal inequality for stochastic convolution integrals.

\begin{theorem}[Kotelenez,~\cite{Kotelenez-1984}] \label{Theorem: Kotelenez inequality}
    There exists a constant $\mathbf{C}$ such that for any $H$-valued c\`adl\`ag locally square integrable martingale $M_t$ we have
    \[ \mathbb{E} \sup_{0\le t\le T} \|\int_0^t S_{t-s}dM_s\|^2 \le \mathbf{C} e^{4\alpha T} \mathbb{E}[M]_T.\]
\end{theorem}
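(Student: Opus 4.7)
The plan is to establish the estimate by Yosida approximation of the generator $A$, which replaces the $t$-dependent family $S_{t-s}$ by a semigroup with a bounded generator and thereby turns the stochastic convolution into an honest semimartingale to which It\^o's formula applies. Concretely, for $\lambda > \alpha$, set $J_\lambda = \lambda(\lambda I - A)^{-1}$ and $A_\lambda = A J_\lambda = \lambda(J_\lambda - I)$; then $A_\lambda$ is a bounded operator generating $S_t^\lambda = e^{tA_\lambda}$, one has $S_t^\lambda x \to S_t x$ as $\lambda \to \infty$ uniformly for $t$ in compacts, and $\|S_t^\lambda\| \le e^{\alpha_\lambda t}$ with $\alpha_\lambda = \lambda\alpha/(\lambda-\alpha) \to \alpha$. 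The approximate stochastic convolution $Z_t^\lambda = \int_0^t S_{t-s}^\lambda\, dM_s$ satisfies the c\`adl\`ag SDE $dZ_t^\lambda = A_\lambda Z_t^\lambda\, dt + dM_t$, $Z_0^\lambda = 0$, and It\^o's formula applied to $F(x) = \|x\|^2$ yields
\[
\|Z_t^\lambda\|^2 = 2\int_0^t \langle Z_s^\lambda, A_\lambda Z_s^\lambda\rangle\, ds + 2\int_0^t \langle Z_{s-}^\lambda, dM_s\rangle + [M]_t,
\]
where the final term aggregates both the continuous quadratic variation and the sum of jump contributions $\sum_{s\le t}\|\Delta M_s\|^2$.

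The growth condition $\|S_t\| \le e^{\alpha t}$ implies that $A - \alpha I$ generates a contraction semigroup and is therefore dissipative; using the resolvent identity $A_\lambda = \lambda(J_\lambda - I)$ together with the bound $\|J_\lambda\| \le \lambda/(\lambda-\alpha)$, one checks $\langle x, A_\lambda x\rangle \le \alpha_\lambda \|x\|^2$. Substituting this, then taking $\sup_{t\le T}$ and expectation, I would control the stochastic integral via Burkholder-Davis-Gundy, combined with Cauchy-Schwarz and Young's inequality $2ab \le \epsilon a^2 + \epsilon^{-1} b^2$, so as to absorb a fraction of $\mathbb{E}\sup_{t\le T}\|Z_t^\lambda\|^2$ into the left-hand side. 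The outcome is an integral inequality of the form
\[
\mathbb{E}\sup_{t\le T} \|Z_t^\lambda\|^2 \le 4\alpha_\lambda \int_0^T \mathbb{E}\sup_{s\le t}\|Z_s^\lambda\|^2\, dt + \mathbf{C}\, \mathbb{E}[M]_T,
\]
which Gronwall's lemma closes as $\mathbb{E}\sup_{t\le T}\|Z_t^\lambda\|^2 \le \mathbf{C}\, e^{4\alpha_\lambda T}\, \mathbb{E}[M]_T$.

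The principal difficulty is now the passage $\lambda \to \infty$ to transfer the bound to the original process $Z_t = \int_0^t S_{t-s}\, dM_s$. The It\^o isometry, together with the pointwise convergence $S_r^\lambda x \to S_r x$ and the uniform operator bound $\|S_r^\lambda\| \le e^{\alpha_\lambda r}$, yields $\mathbb{E}\|Z_t^\lambda - Z_t\|^2 \to 0$ by dominated convergence for each fixed $t$; upgrading this to convergence of the suprema, or at least to an application of Fatou's lemma to $\mathbb{E}\sup_{t\le T}\|Z_t^\lambda\|^2$, requires working with the c\`adl\`ag version of the convolution whose existence is separately known. Since $\alpha_\lambda \to \alpha$, passing to the limit and renaming $\mathbf{C}$ delivers the stated estimate with exponent $e^{4\alpha T}$. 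A subsidiary point to verify is that It\^o's formula cleanly reassembles the jump contributions into $[M]$ in the expansion above, which it does because the drift of $Z^\lambda$ is absolutely continuous and hence contributes no jumps.
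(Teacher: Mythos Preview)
The paper does not supply a proof of this theorem; it is simply quoted from Kotelenez~\cite{Kotelenez-1984} and used as a black-box tool in later sections. There is therefore no paper proof to compare your proposal against.

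That said, your Yosida-approximation approach is a standard and essentially correct route to this inequality. The architecture---reduce to a bounded generator, apply It\^o's formula to $\|Z_t^\lambda\|^2$, use dissipativity of $A_\lambda - \alpha_\lambda I$ to bound the drift, control the stochastic integral by Burkholder--Davis--Gundy plus Young and absorb, close with Gronwall, then let $\lambda\to\infty$---is sound. Two points deserve more care than you give them. First, the absorption step presupposes $\mathbb{E}\sup_{t\le T}\|Z_t^\lambda\|^2<\infty$, which, while true since $A_\lambda$ is bounded, should be justified (a localization argument suffices). Second, the limit passage you correctly flag as ``the principal difficulty'' is not settled by what you wrote: pointwise-in-$t$ $L^2$ convergence $Z_t^\lambda\to Z_t$ does not by itself yield $\mathbb{E}\sup_t\|Z_t\|^2\le\liminf_\lambda\mathbb{E}\sup_t\|Z_t^\lambda\|^2$, and invoking Fatou requires almost-sure convergence of the suprema, which you have not established. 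One clean way to close this is to show that $(Z^\lambda)$ is Cauchy in $L^2(\Omega;L^\infty([0,T];H))$ by applying the bound you already have to differences $Z^\lambda-Z^\mu$; the limit then inherits both a c\`adl\`ag version and the maximal estimate. Kotelenez's original 1984 argument avoids this issue entirely by discretizing the time interval and invoking Doob's maximal inequality directly, rather than passing through Yosida approximants.
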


\begin{remark}
    Hamedani and Zangeneh~\cite{Hamedani-Zangeneh-stopped} generalized this inequality to a stopped maximal inequality for $p$-th moment ($0<p<\infty$) of stochastic convolution integrals.
\end{remark}

Because of the presence of monotone nonlinearity in our equation, we need an energy inequality for stochastic convolution integrals. For this reason the following pathwise inequality for the norm of stochastic convolution integrals has been proved in Zangeneh~\cite{Zangeneh-Paper}.
\begin{theorem}[It\^o type inequality, Zangeneh~\cite{Zangeneh-Paper}]\label{theorem:ito type inequality}
    Let $Z_t$ be an $H$-valued c\`adl\`ag locally square integrable semimartingale. If
    \[ X_t=S_t X_0 + \int_0^t S_{t-s}dZ_s, \]
    then
    \begin{equation*}
        \lVert X_t \rVert ^2 \le e^{2\alpha t}\lVert X_0 \rVert ^2 + 2 \int_0^t {e^{2\alpha (t-s)}\langle X_{s-} , d Z_s \rangle}+\int_0^t {e^{2\alpha (t-s)}d[Z]_s},
    \end{equation*}
    where $[Z]_t$ is the quadratic variation process of $Z_t$.
\end{theorem}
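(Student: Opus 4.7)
The strategy is to regularize $A$ via its Yosida approximation so as to reduce the mild formulation to a strong SDE, apply the classical Hilbert-space It\^o formula to the squared norm, and then pass to the limit. For $\lambda$ sufficiently large let $A_\lambda = \lambda A(\lambda I - A)^{-1}$ be the Yosida approximation, which is a bounded operator on $H$. The associated uniformly continuous semigroup $S_t^\lambda = e^{tA_\lambda}$ satisfies $\|S_t^\lambda\| \le e^{\alpha_\lambda t}$ with $\alpha_\lambda \to \alpha$, and $S_t^\lambda \to S_t$ strongly, uniformly for $t$ in compact intervals.

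Set $X_t^\lambda = S_t^\lambda X_0 + \int_0^t S_{t-s}^\lambda\, dZ_s$. Since $A_\lambda$ is bounded, a stochastic Fubini argument identifies $X_t^\lambda$ as a bona fide $H$-valued semimartingale satisfying the strong equation
\begin{equation*}
X_t^\lambda = X_0 + \int_0^t A_\lambda X_s^\lambda\, ds + Z_t.
\end{equation*}
Applying the classical Hilbert-space It\^o formula to $x \mapsto \|x\|^2$ then yields
\begin{equation*}
\|X_t^\lambda\|^2 = \|X_0\|^2 + 2\int_0^t \langle X_s^\lambda, A_\lambda X_s^\lambda\rangle\, ds + 2\int_0^t \langle X_{s-}^\lambda, dZ_s\rangle + [Z]_t.
\end{equation*}
The crucial dissipativity estimate $\langle A_\lambda x, x\rangle \le \alpha_\lambda \|x\|^2$ follows by differentiating $\|S_t^\lambda x\|^2 \le e^{2\alpha_\lambda t}\|x\|^2$ at $t=0$. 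Combining this with the integrating factor $e^{-2\alpha_\lambda t}$ and integration by parts produces
\begin{equation*}
\|X_t^\lambda\|^2 \le e^{2\alpha_\lambda t}\|X_0\|^2 + 2\int_0^t e^{2\alpha_\lambda(t-s)}\langle X_{s-}^\lambda, dZ_s\rangle + \int_0^t e^{2\alpha_\lambda(t-s)}\, d[Z]_s.
\end{equation*}

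It then remains to pass to the limit $\lambda \to \infty$. The strong convergence $S_t^\lambda \to S_t$ together with Kotelenez's maximal inequality (Theorem~\ref{Theorem: Kotelenez inequality}) applied to the difference of stochastic convolutions driven by the martingale part of $Z$ delivers $X_t^\lambda \to X_t$ in probability, uniformly on compact time intervals, along with a uniform-in-$\lambda$ bound on $\sup_{s\le t}\|X_s^\lambda\|$. The $d[Z]_s$ integral then converges by dominated convergence, and the remaining stochastic integral is handled by the dominated convergence theorem for stochastic integrals driven by locally square integrable semimartingales.

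The main obstacle is precisely this last convergence: the integrand $X_{s-}^\lambda$ depends nonlinearly on $\lambda$ through the convolution, and $Z$ is only locally square integrable with both continuous martingale and jump parts. After a localization reducing $[Z]_\infty$ to a bounded quantity, one combines the uniform $L^2$ control from Kotelenez's inequality with the pointwise convergence $X_{s-}^\lambda \to X_{s-}$ (a consequence of the c\`adl\`ag approximation) to verify the hypotheses of the stochastic dominated convergence theorem and conclude.
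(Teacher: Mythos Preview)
The present paper does not prove this statement; it is quoted from Zangeneh~\cite{Zangeneh-Paper} and used as a tool in the subsequent sections. Your Yosida-approximation strategy---regularize $A$ so that the mild equation becomes a genuine semimartingale equation, apply the Hilbert-space It\^o formula to $\|\cdot\|^2$, absorb the generator term via the dissipativity bound $\langle A_\lambda x,x\rangle\le\alpha_\lambda\|x\|^2$, and pass to the limit---is the standard route and is indeed the argument given in the cited reference, so your outline is on target and there is nothing in the present paper to compare against.

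One technical caveat on the limit step: Kotelenez's inequality (Theorem~\ref{Theorem: Kotelenez inequality}) bounds a convolution against a fixed semigroup, and the difference $\int_0^t (S^\lambda_{t-s}-S_{t-s})\,dM_s$ is not itself of that form, so it cannot be invoked directly on the difference as you suggest. What actually works is to (i) apply Kotelenez separately to each $S^\lambda$ to obtain a uniform-in-$\lambda$ bound on $\mathbb{E}\sup_{s\le t}\|X^\lambda_s\|^2$ (the constants $e^{4\alpha_\lambda T}$ remain bounded because $\alpha_\lambda=\alpha\lambda/(\lambda-\alpha)\to\alpha$), and (ii) get convergence of $\int_0^t S^\lambda_{t-s}\,dM_s$ to $\int_0^t S_{t-s}\,dM_s$ via the It\^o isometry together with dominated convergence, using the pointwise strong convergence $S^\lambda_r h\to S_r h$. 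With that adjustment and the localization of $[Z]$ you already mention, the passage to the limit goes through.
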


\section{Existence and Uniqueness}\label{section: Existence and Uniqueness}

Our proof for the existence of a mild solution relies on an iterative method which in each step requires to solve a deterministic equation, i.e. an equation in which $\omega$ appears only as a parameter. The following theorem proved in Zangeneh~\cite{Zangeneh-measurability} and~\cite{Zangeneh-Thesis} guarantees the solvability of such equations and the measurability of the solution with respect to parameter.

Let $(\Omega,\mathcal{F},\mathcal{F}_t,\mathbb{P})$ be a filtered probability space and assume $f$ satisfies Hypothesis~\ref{main_hypothesis}-(a) and there exists a constant $D$ such that $\|f(t,x,\omega)\|^2 \le D (1+\|x\|^2)$ and assume $V(t,\omega)$ is an adapted process with c\`adl\`ag trajectories and $X_0(\omega)$ is $\mathcal{F}_0$ measurable.

\begin{theorem}[Zangeneh,~\cite{Zangeneh-measurability} and~\cite{Zangeneh-Thesis}]\label{theorem: measurability}
    With assumptions made above, the equation
        \[ X_t=S_t X_0 + \int_0^t S_{t-s}f(s,X_s,\omega) ds + V(t,\omega)\]
    has a unique measurable adapted c\`adl\`ag solution $X_t(\omega)$. Furtheremore
    \[	\|X(t)\| \le \|X_0\|+\|V(t)\|+\int_0^t e^{(\alpha+M)(t-s)} \|f(s,S_s X_0+V(s))\| ds, \]
\end{theorem}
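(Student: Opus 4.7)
The plan is to reduce the question to a pathwise (deterministic) problem, solve it by the standard theory for semilinear equations with semimonotone demicontinuous drift, and finally derive the a priori bound via the It\^o type inequality of Theorem~\ref{theorem:ito type inequality}.

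First I would fix $\omega$ and substitute $W_t := X_t - S_t X_0 - V_t(\omega)$, so the integral equation for $X$ becomes
\[
    W_t = \int_0^t S_{t-s}\,\tilde f(s, W_s)\,ds,\qquad \tilde f(s,w) := f\bigl(s,\,w + S_s X_0 + V_s(\omega),\,\omega\bigr).
\]
Because $f$ is semimonotone with constant $M$, demicontinuous in the spatial variable, and of linear growth, the same holds for $\tilde f$ (the constants absorb the locally bounded c\`adl\`ag path $s\mapsto S_s X_0+V_s(\omega)$). The c\`adl\`ag property of $X$ corresponds to that of $W$, since $V$ is c\`adl\`ag and the convolution $t\mapsto\int_0^t S_{t-s}\tilde f(s,W_s)\,ds$ is continuous.

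For existence and uniqueness of $W$, I would use the standard construction for mild solutions with a semimonotone demicontinuous drift: approximate $\tilde f$ by its Yosida regularization $\tilde f_n$ (which is Lipschitz), solve each approximate equation by Picard iteration on finite intervals, and pass to the limit using the semimonotonicity to show the family is Cauchy in $C([0,T];H)$. Uniqueness is direct: if $W^{(1)},W^{(2)}$ are two solutions, applying Theorem~\ref{theorem:ito type inequality} to $\Delta_t=W_t^{(1)}-W_t^{(2)}$ (with $Z_s=\int_0^s[\tilde f(r,W_r^{(1)})-\tilde f(r,W_r^{(2)})]dr$, finite variation so $[Z]\equiv 0$) gives
\[
    \|\Delta_t\|^2 \le 2\int_0^t e^{2\alpha(t-s)}\langle\Delta_s,\tilde f(s,W_s^{(1)})-\tilde f(s,W_s^{(2)})\rangle\,ds \le 2M\int_0^t e^{2\alpha(t-s)}\|\Delta_s\|^2\,ds,
\]
whence Gronwall yields $\Delta\equiv 0$. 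Joint measurability and adaptedness of $X$ in $\omega$ are preserved at every step of the Yosida/Picard scheme, and persist under the pointwise limit.

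For the estimate, I would again invoke Theorem~\ref{theorem:ito type inequality} pathwise on $W_t$ with $Z_s=\int_0^s \tilde f(r,W_r)\,dr$ (zero initial datum, $[Z]\equiv 0$) to obtain
\[
    \|W_t\|^2 \le 2\int_0^t e^{2\alpha(t-s)}\langle W_s,\tilde f(s,W_s)\rangle\,ds.
\]
Writing $\tilde f(s,W_s)=[\tilde f(s,W_s)-\tilde f(s,0)]+\tilde f(s,0)$, using semimonotonicity on the first summand (bounded by $M\|W_s\|^2$) and Cauchy--Schwarz on the second (noting $\tilde f(s,0)=f(s,S_sX_0+V_s)$), this becomes
\[
    \|W_t\|^2 \le 2M\int_0^t e^{2\alpha(t-s)}\|W_s\|^2\,ds + 2\int_0^t e^{2\alpha(t-s)}\|W_s\|\,\|f(s,S_sX_0+V_s)\|\,ds.
\]
Setting $\phi(t)=e^{-\alpha t}\|W_t\|$ and $h(s)=e^{-\alpha s}\|f(s,S_sX_0+V_s)\|$ reduces this to $\phi(t)^2\le 2M\int_0^t\phi^2+2\int_0^t\phi\,h$. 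A standard device (put $\Phi(t)=\sqrt{\epsilon+\text{RHS}}$, differentiate to get $\Phi'\le M\Phi+h$, apply linear Gronwall, let $\epsilon\downarrow 0$) gives $\phi(t)\le\int_0^t e^{M(t-s)}h(s)\,ds$, equivalently $\|W_t\|\le\int_0^t e^{(\alpha+M)(t-s)}\|f(s,S_sX_0+V_s)\|\,ds$. The triangle inequality $\|X_t\|\le\|S_tX_0\|+\|V_t\|+\|W_t\|$ then yields the stated bound.

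The hard part will be the existence step: since $\tilde f$ is only demicontinuous and semimonotone (not Lipschitz), one cannot Picard-iterate directly on it, and the Yosida-approximation-plus-monotone-convergence procedure has to be carried out with care, as does the verification that the pointwise limit is jointly measurable and adapted in $\omega$. Once existence is in hand, the bound is essentially a clean corollary of the It\^o type inequality plus a scalar Gronwall argument.
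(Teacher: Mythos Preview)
The paper does not prove this theorem; it is quoted from Zangeneh's earlier work~\cite{Zangeneh-measurability,Zangeneh-Thesis} and used as a black box in the iteration scheme of Section~\ref{section: Existence and Uniqueness}. There is therefore no in-paper argument to compare your proposal against.

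Your outline is nonetheless a faithful sketch of how the result is established in those references: the substitution $W_t=X_t-S_tX_0-V_t$ reduces the problem to a mild equation with zero data and shifted drift $\tilde f$; existence is obtained by regularizing the demicontinuous semimonotone $\tilde f$ (Yosida approximation) and passing to the limit; uniqueness and the a~priori bound follow from the pathwise It\^o type inequality (Theorem~\ref{theorem:ito type inequality}) together with a scalar Gronwall device. You also correctly identify the measurability and adaptedness of the $\omega$-wise limit as the genuinely delicate step---this is precisely what~\cite{Zangeneh-measurability} is devoted to.

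Two small caveats on the estimate. First, your final triangle inequality gives $\|X_t\|\le\|S_tX_0\|+\|V_t\|+\|W_t\|\le e^{\alpha t}\|X_0\|+\|V_t\|+\|W_t\|$, which matches the stated bound only when $\alpha\le 0$; this appears to be a harmless imprecision in the paper's transcription, since the estimate is invoked only after the reduction to $\alpha=0$ via Lemma~\ref{lemma: alpha=0}. Second, in your ``standard device'' the passage from $2\Phi\Phi'=2M\phi^2+2\phi h$ to $\Phi'\le M\Phi+h$ uses $M\phi^2\le M\Phi^2$, which needs $M\ge 0$; for $M<0$ the argument as written yields only the weaker bound with $M$ replaced by $0$ in the exponent, and recovering the sharp exponent $e^{(\alpha+M)(t-s)}$ in that case requires a slightly more careful comparison.
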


\begin{remark}
    Note that the original theorem is stated for evolution operators and requires some additional assumptions, but those are automatically satisfied for $C_0$ semigroups. (See Curtain and Pritchard~\cite{Curtain-Pritchard} page 29, Theorem 2.21).
\end{remark}

\begin{theorem}[Existence and Uniqueness of the Mild Solution]\label{theorem:existence and uniqueness}
    Under the assumptions of Hypothesis~\ref{main_hypothesis}, equation~\eqref{main_equation} has a unique square integrable c\`adl\`ag mild solution with initial condition $X_0$.
\end{theorem}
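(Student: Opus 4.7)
The plan is to construct the mild solution by a Picard-type iteration in which the semimonotone drift $f$ is handled pathwise via Theorem~\ref{theorem: measurability}, while the martingale terms are advanced one step behind. Set $X^0_t = S_t X_0$, and given an adapted c\`adl\`ag process $X^n$ with $\mathbb{E}\sup_{t\le T}\|X^n_t\|^2 < \infty$, define the stochastic convolution
\[
V^n(t,\omega) = \int_0^t S_{t-s}\,g(s,X^n_{s-})\,dW_s + \int_0^t\!\!\int_E S_{t-s}\,k(s,\xi,X^n_{s-})\,\tilde N(ds,d\xi),
\]
which is adapted and c\`adl\`ag by Kotelenez. Applying Theorem~\ref{theorem: measurability} with forcing $V^n$ then produces a unique adapted c\`adl\`ag $X^{n+1}$ satisfying
\[
X^{n+1}_t = S_t X_0 + \int_0^t S_{t-s}\, f(s,X^{n+1}_s)\,ds + V^n(t).
\]
A uniform bound $\sup_n \mathbb{E}\sup_{t\le T}\|X^n_t\|^2 < \infty$ follows from Hypothesis~\ref{main_hypothesis}(c), the pathwise estimate in Theorem~\ref{theorem: measurability}, and Theorem~\ref{Theorem: Kotelenez inequality} applied to $V^n$, via a Gronwall iteration.

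Next I would show the sequence is Cauchy. Let $Y^n_t = X^{n+1}_t - X^n_t$; then $Y^n_t = \int_0^t S_{t-s}\,dZ^n_s$, where $Z^n$ is the semimartingale built from the differences of the drift and martingale integrands. Applying the It\^o-type inequality (Theorem~\ref{theorem:ito type inequality}) to $Y^n$ and $Z^n$, using semimonotonicity to bound $\langle Y^n_s,\, f(s,X^{n+1}_s)-f(s,X^n_s)\rangle \le M\|Y^n_s\|^2$, using Hypothesis~\ref{main_hypothesis}(b) to bound $d[Z^n]_s$ by a multiple of $\|X^n_s - X^{n-1}_s\|^2\,ds$, and taking expectation to kill the martingale term, I obtain
\[
\mathbb{E}\|Y^n_t\|^2 \le K_1\int_0^t \mathbb{E}\|Y^n_s\|^2\,ds + K_2\int_0^t \mathbb{E}\|Y^{n-1}_s\|^2\,ds.
\]
Gronwall gives $\mathbb{E}\|Y^n_t\|^2 \le K \int_0^t \mathbb{E}\|Y^{n-1}_s\|^2\,ds$; iterating yields $\sup_{t\le T}\mathbb{E}\|Y^n_t\|^2 \le C(KT)^n/n!$, which is summable. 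A supremum-in-time upgrade is obtained by controlling the martingale contribution in the It\^o-type estimate via a Burkholder--Davis--Gundy argument and absorbing the resulting $\mathbb{E}\sup_{s\le t}\|Y^n_s\|^2$ into the left-hand side. Thus $X^n$ converges uniformly on $[0,T]$ in $L^2(\Omega)$ to some adapted c\`adl\`ag process $X$.

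To identify $X$ as a mild solution, pass to the limit in the defining identity for $X^{n+1}$: the two stochastic integrals converge by the Lipschitz condition (b) and It\^o isometry for the Wiener and compensated Poisson integrals, and the drift term converges by demicontinuity of $f$ in $x$ together with the uniform growth bound from (c) and dominated convergence. Uniqueness is immediate from the same It\^o-type inequality applied to the difference of two mild solutions with the same initial data: semimonotonicity absorbs the drift term, the Lipschitz property of $(g,k)$ controls the quadratic variation, and Gronwall forces the difference to vanish.

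The main obstacle is precisely this Cauchy step, because $f$ is not Lipschitz and a direct contraction argument on the Picard map in $L^2(\Omega;H)$ is unavailable: there is simply no bound on $\|f(x)-f(y)\|$ in terms of $\|x-y\|$. The It\^o-type inequality (Theorem~\ref{theorem:ito type inequality}) is exactly the device that lets one replace the missing norm estimate by the inner-product bound supplied by semimonotonicity, and this is what makes the semigroup approach work without any coercivity assumption on $A$.
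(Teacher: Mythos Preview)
Your proposal is correct and follows essentially the same route as the paper: the iteration scheme, the use of Theorem~\ref{theorem: measurability} for the semimonotone drift, Kotelenez for the finiteness of $\mathbb{E}\sup_t\|X^n_t\|^2$, the It\^o-type inequality plus BDG for the Cauchy estimate with the $\tfrac12\mathbb{E}\sup\|Y^n\|^2$ absorption, and the demicontinuity/dominated-convergence argument for the weak limit of the drift term are all exactly what the paper does. The only cosmetic difference is that the paper goes directly for the sup-in-time Cauchy estimate in one pass rather than first obtaining the pointwise estimate and then upgrading, and that the paper proves only $\mathbb{E}\sup_t\|X^n_t\|^2<\infty$ for each $n$ (not uniformly in $n$), which is all that is actually needed.
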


    \begin{lemma}\label{lemma: alpha=0}
        It suffices to prove theorem~\ref{theorem:existence and uniqueness} for the case that $\alpha=0$.
    \end{lemma}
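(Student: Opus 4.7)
The plan is to perform the standard exponential rescaling that converts an arbitrary $C_0$ semigroup satisfying $\|S_t\|\le e^{\alpha t}$ into a contraction semigroup, and check that the transformed equation still fits Hypothesis~\ref{main_hypothesis}. Concretely, I would set
\[
\tilde S_t := e^{-\alpha t}S_t, \qquad \tilde X_t := e^{-\alpha t} X_t,
\]
and define transformed coefficients
\[
\tilde f(t,x,\omega) := e^{-\alpha t}f(t,e^{\alpha t}x,\omega),\quad
\tilde g(t,x,\omega) := e^{-\alpha t}g(t,e^{\alpha t}x,\omega),\quad
\tilde k(t,\xi,x,\omega) := e^{-\alpha t}k(t,\xi,e^{\alpha t}x,\omega).
\]
Then $\tilde S_t$ is the $C_0$ semigroup generated by $A-\alpha I$, and since $\|\tilde S_t\|\le 1$ it corresponds to $\tilde\alpha=0$. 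If $\alpha\le 0$ one has $\|S_t\|\le 1$ already, so the theorem with $\alpha=0$ applies directly; hence the reduction is only needed in the case $\alpha>0$, where the factor $e^{-\alpha s}\le 1$ makes all the bookkeeping uniform in $s\ge 0$.

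Next I would verify that $\tilde f,\tilde g,\tilde k$ satisfy Hypothesis~\ref{main_hypothesis}. For the semimonotonicity of $\tilde f$, a direct computation gives
\[
\langle \tilde f(t,x)-\tilde f(t,y),x-y\rangle = e^{-2\alpha t}\langle f(t,e^{\alpha t}x)-f(t,e^{\alpha t}y),e^{\alpha t}(x-y)\rangle \le M\|x-y\|^2,
\]
so the same constant $M$ works. The Lipschitz bound in (b) is preserved with the same $C$ by the analogous calculation, and the linear growth bound in (c) is preserved with the same $D$ since $\|\tilde f(t,x)\|^2\le e^{-2\alpha t}D(1+e^{2\alpha t}\|x\|^2)\le D(1+\|x\|^2)$ when $\alpha\ge 0$ (and similarly for $\tilde g,\tilde k$). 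Demicontinuity, predictability, adaptedness, and the hypothesis on $X_0$ are all manifestly inherited by the transformed objects.

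Then I would show that $X_t$ is a mild solution of~\eqref{main_equation} if and only if $\tilde X_t$ is a mild solution of the analogous equation with $(\tilde S_t,\tilde f,\tilde g,\tilde k)$ in place of $(S_t,f,g,k)$ and the same initial data $X_0$. This follows by multiplying~\eqref{mild_solution} by $e^{-\alpha t}$, using the identity $e^{-\alpha t}S_{t-s}=e^{-\alpha s}\tilde S_{t-s}$ inside each of the four integrals, and absorbing the $e^{-\alpha s}$ factor into the integrand (legal since $e^{-\alpha s}$ is deterministic and bounded, so it passes through the Bochner, It\^o and Poisson stochastic integrals). The map $X_t\mapsto e^{-\alpha t}X_t$ is a bijection between c\`adl\`ag adapted processes that preserves square integrability on any finite interval, so existence and uniqueness of the mild solution for the rescaled equation is equivalent to existence and uniqueness for~\eqref{main_equation}.

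The only thing to watch for is that all the manipulations of stochastic integrals by a deterministic factor $e^{-\alpha s}$ are legitimate and that the constants in (b) and (c) do not blow up after the rescaling; as noted above, restricting to $\alpha\ge 0$ keeps every multiplicative factor bounded by $1$. No genuine obstacle remains, so the lemma follows.
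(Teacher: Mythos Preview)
Your proposal is correct and follows exactly the same route as the paper: the paper also sets $\tilde S_t=e^{-\alpha t}S_t$, $\tilde f(t,x)=e^{-\alpha t}f(t,e^{\alpha t}x)$, $\tilde g(t,x)=e^{-\alpha t}g(t,e^{\alpha t}x)$, $\tilde k(t,\xi,x)=e^{-\alpha t}k(t,\xi,e^{\alpha t}x)$ and observes that $X_t$ is a mild solution of~\eqref{main_equation} iff $\tilde X_t=e^{-\alpha t}X_t$ is a mild solution of the transformed equation. Your write-up is in fact more thorough than the paper's, which omits the explicit verification of Hypothesis~\ref{main_hypothesis} for the rescaled coefficients and the case distinction $\alpha\lessgtr 0$.
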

    \begin{proof} Define
        \begin{gather*}
           \tilde{S}_t= e^{-\alpha t} S_t ,\qquad \tilde{f}(t,x,\omega)=e^{-\alpha t}f(t,e^{\alpha t}x,\omega) ,\qquad \tilde{g}(t,x,\omega)=e^{-\alpha t}g(t,e^{\alpha t}x,\omega), \\
           \tilde{k}(t,\xi,x,\omega)=e^{-\alpha t}k(t,\xi,e^{\alpha t}x,\omega).
        \end{gather*}
        Note that $\tilde{S}_t$ is a contraction semigroup. It is easy to see that $X_t$ is a mild solution of equation~\eqref{main_equation} if and only if $\tilde{X}_t=e^{-\alpha t} X_t$ is a mild solution of equation with coefficients $\tilde{S},\tilde{f},\tilde{g},\tilde{k}$.
    \end{proof}

\begin{proof}[Proof of Theorem~\ref{theorem:existence and uniqueness}.]
    \emph{Uniqueness.}
    According to the lemma, we can assume $\alpha=0$. Assume that $X_t$ and $Y_t$ are two mild solutions with same initial conditions. Subtracting them we find
    \[ X_t-Y_t=\int_0^t S_{t-s} dZ_s,\]
    where
    \begin{multline*}
        dZ_t=(f(t,X_t)-f(t,Y_t))dt+(g(t,X_{t-})-g(t,Y_{t-}))dW_t\\
        +\int_E{(k(t,\xi,X_{t-})-k(t,\xi,Y_{t-}))d\tilde{N}}.
    \end{multline*}
    Applying It\^o type inequality (Theorem~\ref{theorem:ito type inequality}) for $\alpha=0$ to $X_t-Y_t$ we find
    \[ \| X_t-Y_t \| ^2 \le 2 \int_0^t {\langle X_{s-}-Y_{s-} , d Z_s \rangle}+[Z]_t. \]
    Taking expectations and noting that integrals with respect to cylindrical Wiener processes and compensated Poisson random measures are martingales, we find that
    \[ \mathbb{E}\lVert X_t-Y_t \rVert ^2 \le 2 \int_0^t {\mathbb{E}{\langle X_{s-}-Y_{s-} , f(s,X_s)-f(s,Y_s) \rangle} ds}+\mathbb{E}[Z]_t, \]
    where
    \[ \mathbb{E}[Z]_t = \int_0^t\mathbb{E}\|g(s,X_s)-g(s,Y_s)\|^2ds+\int_0^t\int_E{\mathbb{E}\|k(s,\xi,X_s)-k(s,\xi,Y_s)\|^2\nu(d\xi)ds}. \]
            Note that for a c\`adl\`ag function the set of discontinuity points is countable, hence when integrating with respect to Lebesgue measure, they can be neglected. Therefore from now on, we neglect the left limits in integrals with respect to Lebesgue measure. Using assumptions of Hypothesis~\ref{main_hypothesis}-(a) and ~\ref{main_hypothesis}-(b) we find that
    \[ \mathbb{E}\lVert X_t-Y_t \rVert ^2 \le (2M+C) \int_0^t {\mathbb{E}\| X_s-Y_s\|^2 ds}.\]
    Using Gronwall's lemma we conclude that $X_t=Y_t$, almost surely.

    \emph{Existence.}
    It suffices to prove the existence of a solution on a finite interval $[0,T]$. Then one can show easily that these solutions are consistent and give a global solution. We define adapted c\`adl\`ag processes $X^{n}_t$ recursively as follows. Let $X^0_t=S_t X_0$. Assume $X^{n-1}_t$ is defined. Theorem~\ref{theorem: measurability} implies that there exists an adapted c\`adl\`ag solution $X^n_t$ of
    \begin{equation}\label{equation: proof of existence_iteration}
        X^n_t=S_t X_0 + \int_0^t S_{t-s}f(s,X^n_s) ds + V^n_t,
    \end{equation}
    where
        \[ V^{n}_t= \int_0^t{S_{t-s}g(s,X^{n-1}_{s-})d W_s} + \int_0^t{\int_E {S_{t-s}k(s,\xi,X^{n-1}_{s-})} \tilde{N}(ds,d\xi)}. \]
    We wish to show that $\{X^n\}$ converges and the limit is the desired mild solution. This is done by the following lemmas.
    \begin{lemma}\label{lemma: finite_second_moment_iteration}
    	\[ \mathbb{E}\sup\limits_{0\le t\le T} \|X^n_t\|^2<\infty. \]
    \end{lemma}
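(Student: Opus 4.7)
The plan is to prove the bound by induction on $n$. For $n=0$ the process is deterministic ($X^0_t = S_t X_0$), so $\|X^0_t\| \le e^{\alpha T}\|X_0\|$ and the bound follows from $\mathbb{E}\|X_0\|^2 < \infty$ (Hypothesis~\ref{main_hypothesis}-(d)). For the induction step, assume $\mathbb{E}\sup_{0\le t\le T}\|X^{n-1}_t\|^2 < \infty$; we split the problem into controlling the stochastic convolution part $V^n_t$ first, and then using Theorem~\ref{theorem: measurability} to transfer that control to $X^n_t$.

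First I would bound the stochastic convolution $V^n_t$. Write $V^n_t = \int_0^t S_{t-s}\,dM^{n-1}_s$ where
\[ M^{n-1}_t = \int_0^t g(s,X^{n-1}_{s-})\,dW_s + \int_0^t\!\!\int_E k(s,\xi,X^{n-1}_{s-})\,\tilde N(ds,d\xi) \]
is a locally square integrable martingale with
\[ \mathbb{E}[M^{n-1}]_T = \mathbb{E}\!\int_0^T \!\Bigl(\|g(s,X^{n-1}_s)\|_{HS}^2 + \int_E\|k(s,\xi,X^{n-1}_s)\|^2\nu(d\xi)\Bigr)\,ds. \]
Applying Hypothesis~\ref{main_hypothesis}-(c) this is dominated by $DT(1 + \mathbb{E}\sup_{0\le t\le T}\|X^{n-1}_t\|^2)$, which is finite by the inductive hypothesis. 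Kotelenez's maximal inequality (Theorem~\ref{Theorem: Kotelenez inequality}) then gives
\[ \mathbb{E}\sup_{0\le t\le T}\|V^n_t\|^2 \le \mathbf{C}e^{4\alpha T}\,\mathbb{E}[M^{n-1}]_T < \infty. \]

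Next I would invoke the pathwise bound from Theorem~\ref{theorem: measurability} applied to the (deterministic in $\omega$) equation~\eqref{equation: proof of existence_iteration}, which yields
\[ \|X^n_t\| \le \|X_0\| + \|V^n_t\| + \int_0^t e^{(\alpha+M)(t-s)} \|f(s, S_sX_0 + V^n_s)\|\,ds. \]
Using the linear-growth bound from Hypothesis~\ref{main_hypothesis}-(c), $\|f(s,y)\| \le \sqrt{D}\,(1 + \|y\|)$, together with $\|S_sX_0\| \le e^{\alpha T}\|X_0\|$, the integrand is controlled by a constant times $1 + \|X_0\| + \|V^n_s\|$. Taking supremum over $t\in[0,T]$, squaring, and applying Cauchy-Schwarz to the time integral gives
\[ \sup_{0\le t\le T}\|X^n_t\|^2 \le C_T\Bigl(1 + \|X_0\|^2 + \sup_{0\le t\le T}\|V^n_t\|^2 + \int_0^T\|V^n_s\|^2\,ds\Bigr) \]
for a deterministic constant $C_T$. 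Taking expectation and using the $V^n$ bound already established completes the induction.

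The only place one needs to be slightly careful is at the start of the induction step: the bound in Theorem~\ref{theorem: measurability} evaluates $f$ at $S_sX_0 + V^n_s$ rather than at the unknown $X^n_s$, so the argument is not circular and no Gronwall step is required here. The subsequent estimation is a routine combination of the linear growth of $f,g,k$ with the two inequalities for stochastic convolution integrals available from Section~\ref{section: Stochastic Convolution Integrals}.
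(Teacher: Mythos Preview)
Your proposal is correct and follows essentially the same approach as the paper: induction on $n$, the pathwise estimate from Theorem~\ref{theorem: measurability} to reduce $\sup_t\|X^n_t\|^2$ to $\sup_t\|V^n_t\|^2$, and Kotelenez's inequality together with Hypothesis~\ref{main_hypothesis}-(c) to control $V^n$ via the inductive hypothesis on $X^{n-1}$. The only cosmetic differences are that you keep $\alpha$ general (the paper has already reduced to $\alpha=0$ via Lemma~\ref{lemma: alpha=0}) and you treat the $V^n$ bound before the $X^n$ bound rather than after.
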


	\begin{proof}
		We prove by induction on $n$. By Theorem~\ref{theorem: measurability} we have the following estimate,
	    \[	\|X^n_t\|\le \|X_0\|+\|V^n_t\|+\int_0^t e^{M(t-s)} \|f(s,S_s X_0+V^n_s)\| ds. \]
	    Hence,
	    \[	\sup\limits_{0\le t\le T} \|X^n_t\|^2\le 3 \|X_0\|^2+3\sup\limits_{0\le t\le T} \|V^n_t\|^2+3\sup\limits_{0\le t\le T} (\int_0^t e^{M(t-s)} \|f(s,S_s X_0+V^n_s)\| ds)^2, \]
	    where by Cauchy-Schwartz inequality we find
	    \[ \le 3 \|X_0\|^2+3\sup\limits_{0\le t\le T} \|V^n_t\|^2 + 3 T e^{2MT} \int_0^T \|f(s,S_s X_0+V^n_s)\|^2 ds, \]
	    now by Hypothesis~\ref{main_hypothesis}-(c) we have
	    \begin{multline*}
	    	\le 3\|X_0\|^2+3\sup\limits_{0\le t\le T} \|V^n_t\|^2 + 3 T e^{2MT} \int_0^T D(1+\|S_s X_0+V^n_s\|^2) ds \\
	    		\le 3\|X_0\|^2+3\sup\limits_{0\le t\le T} \|V^n_t\|^2 + 3 D T e^{2MT}\int_0^T (1+ 2\|X_0\|^2+2\|V^n_s\|^2) ds\\
	    		= 3DT^2e^{2MT} + (3+6DT^2e^{2MT}) \|X_0\|^2 + (3+6DT^2e^{2MT})\sup\limits_{0\le t\le T} \|V^n_t\|^2.
	    \end{multline*}
	    hence for completing the proof it suffices to show that $\mathbb{E}\sup\limits_{0\le t\le T} \|V^n_t\|^2<\infty$.

	    Applying Theorem~\ref{Theorem: Kotelenez inequality} we find,
	    \[ \mathbb{E}\sup\limits_{0\le t\le T} \|V^n_t\|^2 \le \mathbf{C} \mathbb{E} \left( \int_0^T \|g(s,X^{n-1}_{s})\|_{HS}^2 ds +  \int_0^T \int_E \|k(s,\xi,X^{n-1}_{s})\|^2 \nu(d\xi) ds \right)\]
	    where by Hypothesis~\ref{main_hypothesis}-(c),
	    \[ \le \mathbf{C} \mathbb{E} \int_0^T D(1+\|X^{n-1}_s\|^2) ds) \]
	    which is finite by induction Hypothesis.
		The basis of induction follows directly from Hypothesis~\ref{main_hypothesis}-(d).
	\end{proof}

    \begin{lemma}\label{lemma: proof of existence_supremum convergence}
    For $0<t\le T$ we have,
        \begin{equation} \label{equation: proof of existence_supremum convergence}
            \mathbb{E} \sup\limits_{0\le s\le t} \|X^{n+1}_s-X^n_s\|^2 \le C_0 C_1^n \frac{t^n}{n!}
        \end{equation}
				where $C_1= 2C(1+2\mathcal{C}_1^2) e^{4MT}$ and $C_0=\mathbb{E}\sup\limits_{0\le s\le T} \|X^1_s-X^0_s\|^2$.
				(Note that by Lemma~\ref{lemma: finite_second_moment_iteration}, $C_0<\infty$.)
    \end{lemma}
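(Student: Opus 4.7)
The plan is to express the difference $Y^n_t := X^{n+1}_t - X^n_t$ as a stochastic convolution $Y^n_t = \int_0^t S_{t-s}\,dZ^n_s$, where
\begin{multline*}
dZ^n_s = \bigl(f(s,X^{n+1}_s) - f(s,X^n_s)\bigr)\,ds + \bigl(g(s,X^n_{s-}) - g(s,X^{n-1}_{s-})\bigr)\,dW_s\\
 + \int_E \bigl(k(s,\xi,X^n_{s-}) - k(s,\xi,X^{n-1}_{s-})\bigr)\,\tilde{N}(ds,d\xi).
\end{multline*}
This follows by subtracting~\eqref{equation: proof of existence_iteration} at indices $n+1$ and $n$. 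Since $\alpha=0$ by Lemma~\ref{lemma: alpha=0} and $Y^n_0 = 0$, Theorem~\ref{theorem:ito type inequality} then gives the pathwise estimate
\[ \|Y^n_t\|^2 \le 2\int_0^t \langle Y^n_{s-}, dZ^n_s\rangle + [Z^n]_t. \]

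Next I would split the right-hand side into its drift, pure-martingale and quadratic-variation parts. The drift part is controlled by semimonotonicity (Hypothesis~\ref{main_hypothesis}(a)) as $2\int_0^t\langle Y^n_s, f(s,X^{n+1}_s)-f(s,X^n_s)\rangle\,ds \le 2M\int_0^t \|Y^n_s\|^2\,ds$; Hypothesis~\ref{main_hypothesis}(b) yields $\mathbb{E}[Z^n]_t \le C\int_0^t \mathbb{E}\|X^n_s - X^{n-1}_s\|^2\,ds$, since only the $g$- and $k$-terms contribute to $[Z^n]$. Writing the martingale part as $N^n_r := \int_0^r \langle Y^n_{s-}, d\tilde{M}^n_s\rangle$, where $\tilde{M}^n$ is the Wiener-plus-compensated-Poisson component of $Z^n$, I would take the supremum over $r\in[0,t]$ and then expectation, and apply the Burkholder--Davis--Gundy inequality together with the jumpwise Cauchy--Schwarz bound $[N^n]_t \le \sup_{s\le t}\|Y^n_s\|^2\,[\tilde{M}^n]_t$ to obtain
\[ 2\,\mathbb{E}\sup_{r\le t} N^n_r \le 2\mathcal{C}_1\,\mathbb{E}\bigl(\sup_{s\le t}\|Y^n_s\|\cdot[\tilde{M}^n]_t^{1/2}\bigr) \le \tfrac12\,\mathbb{E}\sup_{s\le t}\|Y^n_s\|^2 + 2\mathcal{C}_1^2\,\mathbb{E}[\tilde{M}^n]_t, \]
the second step being Young's inequality; the finiteness afforded by Lemma~\ref{lemma: finite_second_moment_iteration} legitimizes absorbing the $\tfrac12$-term into the left-hand side.

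Setting $\phi_n(t) := \mathbb{E}\sup_{s\le t}\|Y^n_s\|^2$ and using $\mathbb{E}\|Y^n_s\|^2 \le \phi_n(s)$, the previous bounds combine to give the recursion
\[ \phi_n(t) \le 4M\int_0^t \phi_n(s)\,ds + 2C(1+2\mathcal{C}_1^2)\int_0^t \phi_{n-1}(s)\,ds. \]
Applying Gronwall's lemma to the self-term, with the non-decreasing forcing $h(t) = 2C(1+2\mathcal{C}_1^2)\int_0^t \phi_{n-1}(s)\,ds$, I obtain $\phi_n(t) \le e^{4MT}h(t) = C_1\int_0^t \phi_{n-1}(s)\,ds$ on $[0,T]$. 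An induction in $n$ starting from $\phi_0(t)\le C_0$ then produces the claimed bound $\phi_n(t) \le C_0\, C_1^n\, t^n/n!$.

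The main obstacle is the BDG step on the mixed Wiener--Poisson martingale $N^n$: bounding $[N^n]_t$ requires a careful jumpwise Cauchy--Schwarz argument to factor out $\sup\|Y^n\|$ cleanly (the jump contribution to $[N^n]$ a priori involves the random measure $N$ rather than its compensator), and the absorption of $\tfrac12\mathbb{E}\sup\|Y^n\|^2$ into the left-hand side relies on the a priori finiteness $\phi_n(t) < \infty$ provided by Lemma~\ref{lemma: finite_second_moment_iteration}.
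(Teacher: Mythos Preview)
Your proposal is correct and follows essentially the same argument as the paper: the It\^o type inequality applied to $X^{n+1}-X^n$, semimonotonicity on the drift term, BDG with the factorization $[B]_t\le \sup_s\|Y^n_s\|^2\,[M]_t$ followed by Young's inequality and absorption (justified by Lemma~\ref{lemma: finite_second_moment_iteration}), then Gronwall and induction. The only cosmetic difference is that you bundle the drift into $Z^n$ and appeal to the fact that its continuous finite-variation part does not contribute to $[Z^n]$, whereas the paper separates the drift from the martingale $M$ at the outset; the resulting recursion and constants are identical.
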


    \begin{proof}
        We prove by induction on $n$. The statement is obvious for $n=0$. Assume that the statement is proved for $n-1$. We have,
        \begin{equation}\label{equation: proof of existence_X^n+1-X^n}
            X^{n+1}_t-X^n_t= \int_0^t S_{t-s}(f(s,X^{n+1}_s)-f(s,X^n_s)) ds + \int_0^t S_{t-s} dM_s,
        \end{equation}
        where
        \begin{eqnarray*}
            M_t &=& \int_0^t (g(s,X^{n}_{s-})-g(s,X^{n-1}_{s-}))dW_s \\
            & & + \int_0^t \int_E {(k(s,\xi,X^{n}_{s-})-k(s,\xi,X^{n-1}_{s-}))\tilde{N}(ds,d\xi)}.
        \end{eqnarray*}
        Applying It\^o type inequality (Theorem~\ref{theorem:ito type inequality}), for $\alpha=0$, we have
        \begin{multline}\label{equation:proof of existence 1}
            \lVert X^{n+1}_t-X^n_t \rVert ^2 \le 2 \underbrace{\int_0^t {\langle X^{n+1}_{s-}-X^n_{s-},f(s,X^{n+1}_s)-f(s,X^n_s)\rangle ds}}_{A_t}\\
            + 2 \underbrace{\int_0^t {\langle X^{n+1}_{s-}-X^n_{s-},dM_s\rangle}}_{B_t} + [M]_t.
        \end{multline}
				 For the term $A_t$, the semimonotonicity assumption on $f$ implies
        \begin{equation}\label{equation:proof of existence_A_t}
            A_t \le M \int_0^t \|X^{n+1}_s-X^n_s\|^2 ds
        \end{equation}
        We also have
        \begin{multline*}
           \mathbb{E}[M]_t = \int_0^t \mathbb{E}\|g(s,X^{n}_s)-g(s,X^{n-1}_s)\|^2ds\\
           + \int_0^t \int_E \mathbb{E}\|k(s,\xi,X^n_s)-k(s,\xi,X^{n-1}_s)\|^2 \nu(d\xi) ds,
        \end{multline*}
        where by Hypothesis~\ref{main_hypothesis}-(b),
        \begin{equation}\label{equation:proof of existence_[M]_t}
            \le C \int_0^t \mathbb{E}\|X^n_s-X^{n-1}_s\|^2 ds.
        \end{equation}

        Applying Burkholder-Davies-Gundy inequality~(\cite{Peszat-Zabczyk},Theorem 3.50) , for $p=1$, to term $B_t$ we find,
        \begin{eqnarray*}
            \mathbb{E}\sup\limits_{0\le s\le t}|B_s| & \le & \mathcal{C}_1 \mathbb{E}\left([B]_t^\frac{1}{2}\right) \\
            & \le & \mathcal{C}_1 \mathbb{E}\left(\sup\limits_{0\le s\le t} (\|X^{n+1}_s-X^n_s\|) [M]_t^\frac{1}{2} \right)
        \end{eqnarray*}
        where $\mathcal{C}_1$ is the universal constant in the Burkholder-Davies-Gundy inequality. Applying Cauchy-Schwartz inequality we find,
        \begin{equation}\label{equation:proof of existence_B_t}
           \le \frac{1}{4} \mathbb{E} \sup\limits_{0\le s\le t} \|X^{n+1}_s-X^n_s\|^2 + \mathcal{C}_1^2 \mathbb{E}[M]_t.
        \end{equation}
        Now, taking supremum and then expectation on both sides of~\eqref{equation:proof of existence 1} and substituting~\eqref{equation:proof of existence_A_t},~\eqref{equation:proof of existence_[M]_t} and~\eqref{equation:proof of existence_B_t}, we find
        \begin{multline}\label{equation:proof of existence 3}
            \mathbb{E} \sup\limits_{0\le s\le t} \|X^{n+1}_s-X^n_s\|^2 \le 2M \int_0^t \mathbb{E}\|X^{n+1}_s-X^n_s\|^2 ds\\
            + C(1+2\mathcal{C}_1^2) \int_0^t \mathbb{E}\|X^{n}_s-X^{n-1}_s\|^2 ds\\
            + \frac{1}{2} \mathbb{E}(\sup\limits_{0\le s\le t} \|X^{n+1}_s-X^n_s\|)^{2}.
        \end{multline}

        The last term in the right hand side could be subtracted from the left hand side but for this subtraction to be valid it should be finite which is guaranteed by Lemma~\ref{lemma: finite_second_moment_iteration}. After subtraction we find,
            \[ \mathbb{E} \sup\limits_{0\le s\le t} \|X^{n+1}_s-X^n_s\|^2 \le 4 M \int_0^t \mathbb{E}\|X^{n+1}_s-X^n_s\|^2 ds + 2C(1+2\mathcal{C}_1^2) \int_0^t \mathbb{E}\|X^n_s-X^{n-1}_s\|^2 ds, \]
        Now let $h^n(t)=\mathbb{E}\sup\limits_{0\le s\le t} \|X^{n+1}_s-X^n_s\|^2$. Hence,
        \[ h^n(t)\le 4 M \int_0^t h^n(s) ds + 2C(1+2\mathcal{C}_1^2) \int_0^t h^{n-1}(s)ds \]

         Note that by Lemma~\ref{lemma: finite_second_moment_iteration}, $h^n(t)$ is bounded on $[0,T]$. Hence we can use Gronwall's inequality for $h^n(t)$ and find

\[ h^n(t) \le C_1 \int_0^t h^{n-1}(s) ds \]

where by induction hypothesis,

\[ \le C_1 \int_0^t C_0 C_1^{n-1} \frac{s^{n-1}}{(n-1)!} ds = C_0 C_1^n \frac{t^n}{n!} \]

which completes the proof.

\end{proof}

    Back to the proof of Theorem~\ref{theorem:existence and uniqueness}, since the right hand side of~\eqref{equation: proof of existence_supremum convergence} is a convergent series, $\{X^n\}$ is a cauchy sequence in $L^2(\Omega,\mathcal{F},\mathbb{P};L^\infty([0,T];H))$ and hence converges to a process $X_t(\omega)$. By choosing a subsequence they converge almost sure uniformly with respect to $t$, and since $\{X^n_t\}$ are adapted c\`adl\`ag, so is $X_t$.

    It remains to show that $X_t$ is a solution of~\eqref{mild_solution}.
    It suffices to show that the terms on both sides of equation~\eqref{equation: proof of existence_iteration} converge to that of~\eqref{mild_solution}. We know already that $X^n_t \to X_t$ in $L^2([0,T]\times\Omega;H)$. Moreover by Theorem~\ref{Theorem: Kotelenez inequality} we have,
    \begin{multline*}
        \mathbb{E}\|\int_0^t S_{t-s} g(s,X^n_{s-})dW_s - \int_0^t S_{t-s} g(s,X_{s-})dW_s\|^2\\
        \le \mathbf{C} \mathbb{E} \int_0^t\|g(s,X^n_s)-g(s,X_s)\|^2 ds\\
        \le \mathbf{C} C \int_0^t\mathbb{E} \|X^n_s-X_s\|ds \to 0,
    \end{multline*}
    and
    \begin{multline*}
        \mathbb{E}\|\int_0^t\int_E S_{t-s} k(s,\xi,X^n_{s-})d\tilde{N} - \int_0^t\int_E S_{t-s} k(s,\xi,X_{s-})d\tilde{N}\|^2\\
        \le \mathbf{C} \mathbb{E} \int_0^t\int_E\|k(s,\xi,X^n_s)-k(s,\xi,X_s)\|^2 \nu(d\xi)ds\\
        \le \mathbf{C} C \int_0^t\mathbb{E} \|X^n_s-X_s\|ds \to 0.
    \end{multline*}
    The term containing $f$ converges in the weak sense. Let $x\in H$,
    \begin{equation}\label{equation:proof of existence 5}
        \mathbb{E}\langle x,\int_0^t S_{t-s} (f(s,X^n_s)-f(s,X_s)) ds\rangle = \mathbb{E}\int_0^t \langle S_{t-s}^*x,f(s,X^n_s)-f(s,X_s)\rangle ds
    \end{equation}
    By demicontinuity of $f$, the integrand on the right hand side converges to $0$ for almost every $(s,\omega)\in [0,t]\times\Omega$. On the other hand, by Hypothesis~\ref{main_hypothesis}-(c), the integrand is dominated by a constant multiple of $\|x\| (1+\|X_s\|+\|X^n_s\|)$ where $\|X^n_s\| \to \|X_s\|$ pointwise almost everywhere and in $L^1([0,T]\times\Omega)$, hence by dominated convergence theorem we conclude that right hand side of~\eqref{equation:proof of existence 5} tends to $0$. Hence $X_t$ is a mild solution of~\eqref{main_equation}.
\end{proof}

\begin{remark}\label{remark: ordinary Wiener process}
    Although we have considered cylindrical Wiener processes but similar results hold for ordinary Wiener processes. In fact, let $W_t$ be a Wiener process on $K$, and $g(t,x,\omega):\mathbb{R}^+\times H\times \Omega \to L(K,H)$ satisfy Hypothesis~\ref{main_hypothesis} with the Hilbert-Schmidt norm replaced by the operator norm, then theorem~\ref{theorem:existence and uniqueness} and also theorems of the next sections hold also in this case. The reason is that if $\mathcal{K}$ is the RKHS of $W_t$ in the sense of~\cite{Peszat-Zabczyk}, then $W_t$ can be considered as a cylindrical Wiener process on $\mathcal{K}$ and the embedding $\mathcal{K}\hookrightarrow K$ is Hilbert-Schmidt, hence $g$ can be considered to take values in $L_{HS}(\mathcal{K},H)$ and satisfies Hypothesis~\ref{main_hypothesis}.
\end{remark}

\section{Continuity With Respect to Parameter}\label{section: Continuity With Respect to Parameter}

The continuous dependence of the solution of stochastic evolution equations with respect to initial conditions and coefficients has been studied by several authors. Consider the following stochastic evolution equation,
\begin{equation*}
    X_t=S_t X_0+\int_0^t S_{t-s}f(X_s) ds + V_t
\end{equation*}
Da Prato and Zabczyk~\cite{DaPrato_Zabczyk_paper} studied this equation in the case that $S_t$ is an analytic semigroup and $f$ is locally Lipschitz, and showed that the solution $X$ is a continuous function of $V$. Zangeneh~\cite{Zangeneh-Thesis} generalized this result and showed that the solution changes continuously when any or all of $V$, $f$,$A$ and $X_0$ are varied. Zangeneh~\cite{Zangeneh-Thesis} also generalized this result to stochastic evolution equations with Wiener noise and monotone nonlinearity. In the context of L\'evy noise, Albeverio, Mandrekar and R\"udiger~\cite{Albeverio-Mandrekar-Rudiger-2009} proved the continuous dependence of the solution of stochastic evolution equations with L\'evy noise and Lipshcitz coefficients.

In this section we show the continuous dependence of the mild solution of~\eqref{main_equation} on initial conditions and coefficients. The following theorem gives a bound for supremum distance of the mild solutions of two different equations by distances of their initial conditions and their coefficients.

\begin{theorem} [Continuity With Respect to Parameter I]\label{theorem: continuity I}
    Assume that for $n=0,1$, $f_n(t,x,\omega),g_n(t,x,\omega)$ and $k_n(t,\xi,x,\omega)$ satisfy Hypothesis~\ref{main_hypothesis} with the same constants. Let $X^n_t$ be the unique mild solution of
    \begin{multline*}
        dX^n_t= A X^n_t dt+f_n(t,X^n_t) dt + g_n(t,X^n_{t-})d W_t
        + \int_E k_n(t,\xi,X^n_{t-}) \tilde{N}(dt,d\xi),
    \end{multline*}
    with initial condition $X^n_0$. Then,
    \begin{multline}\label{equation: continuity}
        \mathbb{E} \sup\limits_{0\le t\le T} e^{-2\alpha t} \| X^1_t-X^0_t \| ^2 \le 2 e^{C_1T} \mathbb{E}\|X^1_0-X^0_0\|^2 \\
        + 2 e^{C_1 T} \int_0^T e^{-2\alpha t} \mathbb{E}\|f_1(t,X^0_t)-f_0(t,X^0_t)\|^2 dt\\
        + C_2 e^{C_1 T}\int_0^T e^{-2\alpha t} {\mathbb{E}\|(g_1(t,X^0_t)-g_0(t,X^0_t))\|^2 dt}\\
        + C_2 e^{C_1 T}\int_0^T \int_E{ e^{-2\alpha t} \mathbb{E}\|(k_1(t,\xi,X^0_t)-k_0(t,\xi,X^0_t))\|^2 \nu(d\xi) dt},
    \end{multline}
    for $C_1=4 M + 2 + C (8\mathcal{C}_1^2+4)$ and $C_2=8\mathcal{C}_1^2+4$ where $\mathcal{C}_1$ is the constant in Burkholder-Davies-Gundy inequality.
\end{theorem}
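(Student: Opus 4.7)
The plan is to mirror the uniqueness step of Theorem~\ref{theorem:existence and uniqueness}: apply the It\^o type inequality to the difference $Y_t := X^1_t - X^0_t$, split each coefficient difference into a ``same coefficient evaluated at the two solutions'' part plus a ``same argument with different coefficients'' part, and close the estimate with Gronwall's lemma.

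As a first step I would reduce to $\alpha=0$ by the rescaling of Lemma~\ref{lemma: alpha=0}, applied simultaneously to both triples $(f_n,g_n,k_n)$. The constants $M$ and $C$ are preserved, and since $\|\tilde X^1_t - \tilde X^0_t\|^2 = e^{-2\alpha t}\|X^1_t - X^0_t\|^2$ while each coefficient-perturbation norm at $X^0_t$ also picks up a factor $e^{-2\alpha t}$, the weights in~\eqref{equation: continuity} emerge automatically upon unrescaling. Assuming now $\alpha=0$, I would write $Y_t = S_t(X^1_0-X^0_0) + \int_0^t S_{t-s}\,dZ_s$ with
\begin{multline*}
dZ_s = (f_1(s,X^1_s)-f_0(s,X^0_s))\,ds + (g_1(s,X^1_{s-})-g_0(s,X^0_{s-}))\,dW_s \\ + \int_E (k_1(s,\xi,X^1_{s-})-k_0(s,\xi,X^0_{s-}))\,\tilde{N}(ds,d\xi),
\end{multline*}
so that Theorem~\ref{theorem:ito type inequality} gives
\[
\|Y_t\|^2 \le \|X^1_0-X^0_0\|^2 + 2\int_0^t \langle Y_{s-}, dZ_s\rangle + [Z]_t.
\]

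Adding and subtracting the ``mixed'' coefficient in each term, semimonotonicity of $f_1$ controls $2\langle Y_s, f_1(s,X^1_s)-f_1(s,X^0_s)\rangle \le 2M\|Y_s\|^2$, and $2ab \le a^2+b^2$ controls the cross term $2\langle Y_s, f_1(s,X^0_s)-f_0(s,X^0_s)\rangle$. For $[Z]_t$, the bound $\|a-b\|^2\le 2\|a-c\|^2+2\|c-b\|^2$ together with Hypothesis~\ref{main_hypothesis}-(b) produces a $2C\int_0^t \|Y_s\|^2\,ds$ piece plus the $g$- and $k$-perturbation contributions with prefactor $2$. Writing $B_t := \int_0^t \langle Y_{s-}, dM_s\rangle$ for $M$ the martingale part of $Z$, Burkholder--Davis--Gundy with $p=1$, combined with $[B]_t \le \sup_{s\le t}\|Y_s\|^2\,[M]_t$ and Young's inequality with weight $\varepsilon = 1/(2\mathcal{C}_1)$, gives
\[
2\,\mathbb{E}\sup_{s\le t}|B_s| \le \tfrac12\,\mathbb{E}\sup_{s\le t}\|Y_s\|^2 + 2\mathcal{C}_1^2\,\mathbb{E}[M]_t.
\]

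Setting $h(t) := \mathbb{E}\sup_{s\le t}\|Y_s\|^2$, taking supremum and expectation in the It\^o inequality and absorbing the $\tfrac12 h(t)$ to the left (which is legitimate because the existence theorem puts each mild solution in $L^2(\Omega;L^\infty([0,T];H))$, so $h(T)<\infty$), one obtains after doubling
\[
h(t) \le 2\,\mathbb{E}\|X^1_0-X^0_0\|^2 + \Phi(t) + C_1\int_0^t h(s)\,ds,
\]
where $C_1 = 4M+2+C(8\mathcal{C}_1^2+4)$ and $\Phi(t)$ is the nondecreasing quantity formed by the coefficient-perturbation integrals with prefactors $2$ (for $f$) and $C_2 = 8\mathcal{C}_1^2+4$ (for $g$ and $k$). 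Gronwall's inequality then delivers~\eqref{equation: continuity} in the $\alpha=0$ case, and the rescaling of the first step restores the $e^{-2\alpha t}$ weights. The only real difficulty is the careful bookkeeping required to recover the exact constants $C_1$ and $C_2$ through the BDG estimate, Young's inequality, and the absorption step; conceptually everything is a direct adaptation of the uniqueness argument.
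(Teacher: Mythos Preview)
Your proposal is correct and follows essentially the same route as the paper's own proof: reduce to $\alpha=0$ via the rescaling of Lemma~\ref{lemma: alpha=0}, apply the It\^o type inequality to $X^1-X^0$, split each coefficient difference by inserting $f_1(s,X^0_s)$, $g_1(s,X^0_s)$, $k_1(s,\xi,X^0_s)$, control the martingale term with BDG for $p=1$ plus Young's inequality, absorb the $\tfrac12\,\mathbb{E}\sup\|Y_s\|^2$, and finish with Gronwall. The bookkeeping you outline reproduces exactly the constants $C_1=4M+2+C(8\mathcal{C}_1^2+4)$ and $C_2=8\mathcal{C}_1^2+4$ of the paper.
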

\begin{proof}
    First we consider the case that $\alpha=0$. Subtract $X^1$ and $X^0$:
    \begin{multline*}
        X^1_t-X^0_t=S_t (X^1_0-X^0_0)\\
        + \int_0^t S_{t-s} (f_1(s,X^1_s)-f_0(s,X^0_s))ds
        + \int_0^t S_{t-s} dM_s,
    \end{multline*}
    where
    \[ M_t=\int_0^t (g_1(s,X^1_{s-})-g_0(s,X^0_{s-}))dW_s+\int_E (k_1(s,\xi,X^1_{s-})-k_0(s,\xi,X^0_{s-}))d\tilde{N}. \]
    Applying It\^o type inequality (Theorem~\ref{theorem:ito type inequality}), for $\alpha=0$, to $X^1-X^0$ we find
    \begin{multline}\label{equation:proof of continuity 1}
        \| X^1_t-X^0_t \| ^2 \le \| X^1_0-X^0_0 \| ^2 +  2 \underbrace {\int_0^t {\langle X^1_{s-}-X^0_{s-} , (f_1(s,X^1_s)-f_0(s,X^0_s))\rangle ds}}_\bold{A_t}\\
        + 2 \underbrace {\int_0^t {\langle X^1_{s-}-X^0_{s-} , d M_s \rangle}}_\bold{B_t}+ [M]_t.
    \end{multline}
    We have
    \begin{multline*}
        \bold{A_t} =  \int_0^t {\langle X^1_{s-} - X^0_{s-} , f_1(s,X^1_s)-f_1(s,X^0_s) \rangle ds}\\
        + \int_0^t \langle X^1_{s-}-X^0_{s-} , f_1(s,X^0_s)-f_0(s,X^0_s) \rangle ds.
    \end{multline*}
    Using the monotonicity assumption and Cauchy-Schwartz inequality we have
    \begin{multline}\label{equation:proof of continuity 3}
        \bold{A_t} \le M \int_0^t \|X^1_{s}-X^0_{s}\|^2 ds + \frac{1}{2}\int_0^t \|X^1_{s}-X^0_{s}\|^2 ds\\
        + \frac{1}{2} \int_0^t \|f_1(s,X^0_{s})-f_0(s,X^0_{s})\|^2 ds.
    \end{multline}
    Applying Burkholder-Davies-Gundy inequality for $p=1$ to term $\mathbf{B_t}$ we find
        \[ \mathbb{E}\sup\limits_{0\le s\le t}\mathbf{|B_s|} \le \mathcal{C}_1\mathbb{E}\left(\sup\limits_{0\le s\le t} \|X^1_s-X^0_s\|[M]_t^\frac{1}{2}\right),\]
    and by Cauchy-Schwartz inequality,
    \begin{equation}\label{equation:proof of continuity 4}
        \le \frac{1}{4} \mathbb{E}\sup\limits_{0\le s\le t} \|X^1_s-X^0_s\|^2 + \mathcal{C}_1^2 \mathbb{E}[M]_t.
    \end{equation}
    We have
    \begin{eqnarray*}
        \mathbb{E}[M]_t &=& \int_0^t{\mathbb{E}\|(g_1(s,X^1_s)-g_0(s,X^0_s))\|^2 ds}\\
        &&+ \int_0^t\int_E{\mathbb{E}\|(k_1(s,\xi,X^1_s)-k_0(s,\xi,X^0_s))\|^2 \nu(d\xi) ds}\\
        & \le & 2 \int_0^t{\mathbb{E}\|(g_1(s,X^1_s)-g_1(s,X^0_s))\|^2 ds}\\
        && + 2\int_0^t{\mathbb{E}\|(g_1(s,X^0_s)-g_0(s,X^0_s))\|^2 ds} \\
        && + 2\int_0^t\int_E{\mathbb{E}\|(k_1(s,\xi,X^1_s)-k_1(s,\xi,X^0_s))\|^2 \nu(d\xi) ds}\\
        && +2\int_0^t\int_E{\mathbb{E}\|(k_1(s,\xi,X^0_s)-k_0(s,\xi,X^0_s))\|^2 \nu(d\xi) ds}.
    \end{eqnarray*}
    Using the Lipschitz assumption on $g$ and $k$ we find
    \begin{multline}\label{equation:proof of continuity 5}
        \mathbb{E}[M]_t \le 2C \int_0^t \mathbb{E}\|X^1_s-X^0_s\|^2 ds\\
        + 2\int_0^t{\mathbb{E} \|(g_1(s,X^0_s)-g_0(s,X^0_s))\|^2 ds}\\
        + 2\int_0^t\int_E{\mathbb{E} \|(k_1(s,\xi,X^0_s)-k_0(s,\xi,X^0_s))\|^2 \nu(d\xi) ds}.
    \end{multline}
    Substituting \eqref{equation:proof of continuity 3}, \eqref{equation:proof of continuity 4} and \eqref{equation:proof of continuity 5} in \eqref{equation:proof of continuity 1}, after cancellation we find
    \begin{eqnarray*}
        \mathbb{E} \sup\limits_{0\le s\le t} \| X^1_s-X^0_s \| ^2 &\le&  C_1 \int_0^t \mathbb{E}\|X^1_s-X^0_s\|^2 ds + 2 \mathbb{E}\|X^1_0-X^0_0\|^2  \\
        && + 2 \int_0^t \mathbb{E}\|f_1(s,X^0_s)-f_0(s,X^0_s)\|^2 ds\\
        && + C_2\int_0^t{\mathbb{E}\|(g_1(s,X^0_s)-g_0(s,X^0_s))\|^2 ds}\\
        && + C_2\int_0^t\int_E{\mathbb{E}\|(k_1(s,\xi,X^0_s)-k_0(s,\xi,X^0_s))\|^2 \nu(d\xi) ds},
    \end{eqnarray*}
    where $C_1=4 M + 2 + C(8\mathcal{C}_1^2+4)$ and $C_2=8\mathcal{C}_1^2+4$.

    Now applying Gronwall's inequality the statement follows. Hence the proof for the case $\alpha=0$ is complete. Now for the general case, apply the change of variables used in Lemma~\ref{lemma: alpha=0}.

\end{proof}

As a consequence of Theorem~\ref{theorem: continuity I} we prove that if the coefficients and initial conditions of a sequence of equations converge, then their mild solutions also converge to the mild solution of the limiting equation. The convergence that we prove is in a stronger sense than similar result in~\cite{Albeverio-Mandrekar-Rudiger-2009}.

\begin{corollary}[Continuity With Respect to Parameter II]\label{corollary: continuity II}
    Assume that for $n=0,1,2,\ldots$, $f_n$, $g_n$, $k_n$ and $X^n_0$ satisfy Hypothesis~\ref{main_hypothesis} with same constants and assume that for every $t \in [0,T]$ and $x\in H$ we have almost surely
    \begin{eqnarray*}
        &f_n(t,x,\omega)\to f_0(t,x,\omega)&\\
        &g_n(t,x,\omega)\to g_0(t,x,\omega)&\\
        &\int_E{\|k_n(t,\xi,x,\omega)-k_0(t,\xi,x,\omega)\|^2 \nu(d\xi)} \to 0&\\
        &\mathbb{E}\|X^n_0-X^0_0\|^2 \to 0.&
    \end{eqnarray*}
    Then
    \begin{equation*}
        \mathbb{E} \sup\limits_{0\le t\le T} \|X^n_t-X^0_t\|^2  \to 0.
    \end{equation*}
\end{corollary}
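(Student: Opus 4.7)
The plan is to invoke Theorem~\ref{theorem: continuity I} with $(f_1,g_1,k_1,X^1_0)$ relabelled as $(f_n,g_n,k_n,X^n_0)$ and $(f_0,g_0,k_0,X^0_0)$ kept as the reference solution, and then to verify that each of the four terms on the right-hand side of~\eqref{equation: continuity} tends to zero as $n\to\infty$. Since $e^{-2\alpha t}$ lies between $e^{-2|\alpha|T}$ and $e^{2|\alpha|T}$ on $[0,T]$, the weighted supremum in~\eqref{equation: continuity} and the unweighted one $\mathbb{E}\sup_{0\le t\le T}\|X^n_t-X^0_t\|^2$ differ only by a multiplicative constant, so it is enough to drive the right-hand side of~\eqref{equation: continuity} to zero.

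The initial-condition term $2e^{C_1T}\mathbb{E}\|X^n_0-X^0_0\|^2$ vanishes directly by hypothesis. For each of the three remaining terms I would apply the dominated convergence theorem on $[0,T]\times\Omega$ (respectively $[0,T]\times\Omega\times E$ for the $k$-term). The pointwise integrands
\[
\|f_n(t,X^0_t)-f_0(t,X^0_t)\|^2,\quad \|g_n(t,X^0_t)-g_0(t,X^0_t)\|_{L_{HS}}^2,\quad \int_E\|k_n(t,\xi,X^0_t)-k_0(t,\xi,X^0_t)\|^2\nu(d\xi)
\]
tend to zero in $\omega$ by the convergence hypotheses on the coefficients, once they are evaluated along the fixed process $X^0_t(\omega)$. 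A common integrable dominator is furnished by Hypothesis~\ref{main_hypothesis}-(c): since all $f_n,g_n,k_n$ satisfy the same linear growth bound, each of the three squared differences is controlled by $4D(1+\|X^0_t\|^2)$, and the existence proof of $X^0$ (in particular Lemma~\ref{lemma: finite_second_moment_iteration} applied to the mild solution obtained as the limit of the iterates) guarantees $\mathbb{E}\int_0^T(1+\|X^0_t\|^2)\,dt<\infty$. Dominated convergence then forces the three remaining terms in~\eqref{equation: continuity} to vanish, and the corollary follows.

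The main technical obstacle is the step that turns ``for every fixed $(t,x)$, convergence holds almost surely'' into ``almost surely, convergence holds at the random point $(t,X^0_t(\omega))$''. The exceptional $\mathbb{P}$-null set in the hypothesis a priori depends on $(t,x)$, so one cannot evaluate at a random argument without an additional measurability argument. This is handled in the standard way via the separability of $H$: choose a countable dense subset $\{x_k\}\subset H$ and a countable dense subset of $[0,T]$, take the union of the corresponding null sets to obtain a single $\mathbb{P}$-null set outside of which the convergence is uniform in $k$, and then transfer the convergence from this dense set to the random point $X^0_t(\omega)$ using the demicontinuity of $f_0$ in~\ref{main_hypothesis}-(a), the Lipschitz continuity of $g_0$ and $k_0$ in~\ref{main_hypothesis}-(b), and the uniform growth bound~\ref{main_hypothesis}-(c) (which controls the tails of $f_n,g_n,k_n$). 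With this measurability bookkeeping in place, the dominated convergence argument above goes through and the stated convergence $\mathbb{E}\sup_{0\le t\le T}\|X^n_t-X^0_t\|^2\to 0$ is established.
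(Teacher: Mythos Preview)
Your approach is exactly the one in the paper: apply Theorem~\ref{theorem: continuity I} with $(f_n,g_n,k_n,X^n_0)$ against the reference $(f_0,g_0,k_0,X^0_0)$, observe that the weight $e^{-2\alpha t}$ is harmless on $[0,T]$, and kill each term on the right of~\eqref{equation: continuity} by dominated convergence with the dominator $4D(1+\|X^0_t\|^2)$ coming from Hypothesis~\ref{main_hypothesis}-(c). The paper's own proof is in fact terser than yours: it simply asserts that the integrands tend to zero almost everywhere on $[0,T]\times\Omega$ and invokes dominated convergence, without discussing the null-set issue you raise.

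You are right to flag that issue, and your Lipschitz-based transfer argument for $g$ and $k$ is fine: since all $g_n$ (resp.\ all $k_n$) share the same Lipschitz constant, convergence on a countable dense set in $H$ upgrades to convergence at every $x\in H$ off a single null set. For the $f$-term, however, your sketch does not close. Demicontinuity of $f_0$ gives only weak convergence $f_0(x_k)\rightharpoonup f_0(x)$, not norm convergence, and you have no uniform-in-$n$ modulus controlling $\|f_n(t,x)-f_n(t,x_k)\|$; semimonotonicity and linear growth do not supply one. So the step ``transfer the convergence from the dense set to $X^0_t(\omega)$ using demicontinuity of $f_0$ and the growth bound'' is not justified as written. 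The paper does not address this point either; if you want a watertight argument you will need either a slightly stronger hypothesis (e.g.\ that the null set is independent of $(t,x)$, or that the $f_n$ are equicontinuous in $x$) or a more delicate argument exploiting the structure of the $f$-integral in~\eqref{equation: continuity}.
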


\begin{proof}
    Apply Theorem~\ref{theorem: continuity I} for $X^n$ and $X^0$. Note that by Hypothesis~\ref{main_hypothesis}-(c) the integrands on the right hand side of~\eqref{equation: continuity} are dominated by a constant multiple of $(1+\|X^0_t(\omega)\|^2)$, on the other hand by assumptions they tend to zero almost everywhere on $[0,T]\times\Omega$. Hence by dominated convergence theorem, the right hand side of~\eqref{equation: continuity} tends to $0$ and therefore
        \[ \mathbb{E} \sup\limits_{0\le t\le T} e^{-2\alpha t} \| X^1_t-X^0_t \| ^2 \to 0 \]
    which implies the statement.
\end{proof}

As another consequence of Theorem~\ref{theorem: continuity I} it follows that if the contraction coefficient of the semigroup is negative enough, then all the mild solutions are exponentially stable.

\begin{corollary} [Exponential Stability]\label{theorem: Exponential Stability}
    Let $X_t$ and $Y_t$ be mild solutions of~\eqref{main_equation} with initial conditions $X_0$ and $Y_0$. Then
    \begin{eqnarray*}
        \mathbb{E} \| X_t-Y_t \| ^2 &\le& 2 e^{\gamma t} \mathbb{E}\|X_0-Y_0\|^2
    \end{eqnarray*}
    for $\gamma= 2\alpha + 4 M + 2 + C(8\mathcal{C}_1^2+4)$. In particular, if $\gamma < 0$ then all mild solutions are exponentially stable.
\end{corollary}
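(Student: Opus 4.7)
The plan is to derive this as a direct corollary of Theorem~\ref{theorem: continuity I}. Since $X_t$ and $Y_t$ both solve equation~\eqref{main_equation} with the same coefficients $f, g, k$, I would set $f_1 = f_0 = f$, $g_1 = g_0 = g$, and $k_1 = k_0 = k$ in that theorem, with the two initial conditions $X_0$ and $Y_0$ (playing the roles of $X^1_0, X^0_0$). The last three terms on the right-hand side of~\eqref{equation: continuity}, which measure discrepancies between coefficients, vanish identically, leaving
\begin{equation*}
    \mathbb{E} \sup_{0 \le s \le T} e^{-2\alpha s} \|X_s - Y_s\|^2 \le 2 e^{C_1 T} \mathbb{E}\|X_0 - Y_0\|^2,
\end{equation*}
with $C_1 = 4M + 2 + C(8\mathcal{C}_1^2 + 4)$ as before.

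Next, I would fix an arbitrary $t > 0$ and apply the above inequality on the interval $[0,t]$ (that is, take $T = t$). Dropping the supremum and keeping just the value at the endpoint gives
\begin{equation*}
    e^{-2\alpha t} \mathbb{E}\|X_t - Y_t\|^2 \le 2 e^{C_1 t} \mathbb{E}\|X_0 - Y_0\|^2,
\end{equation*}
and multiplying through by $e^{2\alpha t}$ yields $\mathbb{E}\|X_t - Y_t\|^2 \le 2 e^{\gamma t} \mathbb{E}\|X_0 - Y_0\|^2$ with $\gamma = 2\alpha + C_1$, which is exactly the claimed bound.

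For the exponential-stability conclusion, if $\gamma < 0$ then $e^{\gamma t} \to 0$ as $t \to \infty$, so $\mathbb{E}\|X_t - Y_t\|^2 \to 0$ at an exponential rate; this yields exponential stability in mean square of any pair of mild solutions (in particular, of any solution toward a stationary one when it exists). Essentially no obstacle is expected here, since the hard work was done in Theorem~\ref{theorem: continuity I}; the only subtle point is remembering that the factor $e^{C_1 T}$ in that theorem is uniform on $[0,T]$ but becomes $e^{C_1 t}$ precisely because the inequality may be invoked with $T$ replaced by any chosen terminal time $t$.
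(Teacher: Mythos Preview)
Your proposal is correct and matches the paper's intended approach exactly: the paper presents this result explicitly as ``another consequence of Theorem~\ref{theorem: continuity I}'' and gives no separate proof, so specializing that theorem to $f_1=f_0$, $g_1=g_0$, $k_1=k_0$, applying it on $[0,t]$, and dropping the supremum is precisely what is required.
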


\section{Markov Property}\label{section: Markov Property}

In this section we assume that $f$, $g$ and $k$ are deterministic functions and satisfy Hypothesis~\ref{main_hypothesis}. Let $0\le s \le t$ and $\eta: \Omega \to H$ be $\mathcal{F}_s$-measurable and square integrable. We denote by $X(s,\eta,t)$ the value at time $t$ of the solution of~\eqref{main_equation} starting at time $s$ from $\eta$. Let $B_b(H)$ be the space of real valued bounded measurable functions on $H$. For $\varphi \in B_b(H)$ and $x\in H$ define
\[ P_{s,t}\varphi (x):= \mathbb{E}\varphi(X(s,x,t)). \]
$P_{s,t}$ is called the \emph{transition semigroup}.

\begin{theorem}[Markov Property]\label{theorem: markov property}
    For $0\le r \le s \le t$ and $\varphi\in B_b(H)$ we have almost surely
    \[ \mathbb{E}\left( \varphi(X(r,x,t)|\mathcal{F}_s \right) = P_{s,t}\varphi (X(r,x,s)) \qquad \mathbb{P}-\mathrm{almost\,\,sure}. \]
\end{theorem}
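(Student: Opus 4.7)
The plan is the standard two-step argument: establish a pathwise flow property using uniqueness, then an ``independent increments'' identity for $\mathcal{F}_s$-measurable starting points, and finally combine them.

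\emph{Step 1 (Flow property).} I would first show that for $r\le s\le t$ and any $x\in H$,
\[ X(r,x,t) = X\bigl(s,X(r,x,s),t\bigr) \qquad \mathbb{P}\text{-a.s.} \]
Indeed, restrict the process $u\mapsto X(r,x,u)$ to $u\ge s$. Using $S_{t-r}=S_{t-s}S_{s-r}$ and splitting each integral in the mild equation at $s$, one sees that $u\mapsto X(r,x,u)$ satisfies the mild equation on $[s,\infty)$ with initial value $X(r,x,s)$ at time $s$. The uniqueness half of Theorem~\ref{theorem:existence and uniqueness} (applied with the filtration shifted to start at $s$) forces this process to coincide with $X(s,X(r,x,s),\cdot)$.

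\emph{Step 2 (Independence identity).} The key lemma is: for any $\mathcal{F}_s$-measurable square integrable $\eta:\Omega\to H$ and any $\varphi\in B_b(H)$,
\[ \mathbb{E}\bigl[\varphi(X(s,\eta,t))\,\big|\,\mathcal{F}_s\bigr] = (P_{s,t}\varphi)(\eta) \qquad \mathbb{P}\text{-a.s.} \]
I would prove this in three substeps. First, for deterministic $\eta=x$ the statement is trivial once we observe that $X(s,x,t)$ is constructed from the iteration~\eqref{equation: proof of existence_iteration} using only the Wiener increments $W_u-W_s$ for $u\in[s,t]$ and the restriction of $N$ to $(s,t]\times E$, all of which are independent of $\mathcal{F}_s$; hence $\varphi(X(s,x,t))$ is independent of $\mathcal{F}_s$ and its conditional expectation equals its expectation $(P_{s,t}\varphi)(x)$. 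Second, for a simple $\eta=\sum_{i=1}^n x_i\mathbf{1}_{A_i}$ with $\{A_i\}\subset\mathcal{F}_s$ a partition of $\Omega$, uniqueness gives $X(s,\eta,t)=\sum_i \mathbf{1}_{A_i}X(s,x_i,t)$ a.s., so pulling the $\mathcal{F}_s$-measurable indicators out of the conditional expectation and using the deterministic case yields the identity. Third, for general square integrable $\mathcal{F}_s$-measurable $\eta$, I would pick simple $\eta_n\to\eta$ in $L^2(\Omega;H)$; Corollary~\ref{corollary: continuity II} (applied with $f,g,k$ fixed and only initial conditions varying) gives $\mathbb{E}\sup_{u\le T}\|X(s,\eta_n,u)-X(s,\eta,u)\|^2\to 0$, so $X(s,\eta_n,t)\to X(s,\eta,t)$ in probability. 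For $\varphi\in C_b(H)$ this lets us pass to the limit on the left, while on the right the map $x\mapsto (P_{s,t}\varphi)(x)$ is continuous (again by Corollary~\ref{corollary: continuity II}), so $(P_{s,t}\varphi)(\eta_n)\to (P_{s,t}\varphi)(\eta)$ in probability and, by bounded convergence, in $L^1$. A monotone class / Dynkin $\pi$-$\lambda$ argument then extends the identity from $C_b(H)$ to all of $B_b(H)$.

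\emph{Step 3 (Conclusion).} Applying Step 2 with $\eta:=X(r,x,s)$, which is $\mathcal{F}_s$-measurable and square integrable, and using the flow identity from Step 1,
\[ \mathbb{E}\bigl[\varphi(X(r,x,t))\,\big|\,\mathcal{F}_s\bigr] = \mathbb{E}\bigl[\varphi(X(s,X(r,x,s),t))\,\big|\,\mathcal{F}_s\bigr] = (P_{s,t}\varphi)(X(r,x,s)), \]
which is the claim.

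The main obstacle I expect is the approximation argument in Step~2: verifying that the iterative construction of $X(s,x,t)$ really depends only on the noise on $(s,t]$ (so as to be independent of $\mathcal{F}_s$) and carrying out the monotone class step cleanly so that both sides of the identity are genuine versions of the same conditional expectation. Everything else is a direct consequence of uniqueness (Theorem~\ref{theorem:existence and uniqueness}) and the continuous dependence on initial data (Corollary~\ref{corollary: continuity II}).
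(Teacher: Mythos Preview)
Your proof is correct and follows essentially the same approach as the paper: establish the flow identity $X(r,x,t)=X(s,X(r,x,s),t)$ via uniqueness, prove the conditional-expectation identity first for simple $\mathcal{F}_s$-measurable $\eta$ using independence of $X(s,y,t)$ from $\mathcal{F}_s$, extend to general $\eta$ by approximation using the continuous dependence on initial data, and reduce from $B_b(H)$ to $C_b(H)$. The only cosmetic difference is that the paper places the $C_b/B_b$ reduction at the very beginning (using that every bounded Borel function is a pointwise limit of a uniformly bounded sequence in $C_b$) rather than at the end via a monotone-class argument.
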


\begin{proof}
    Let $C_b(H)$ denote the set of real valued bounded continuous functions on $H$. It suffices to prove the theorem for $\varphi\in C_b(H)$ since every $\varphi\in B_b(H)$ is the pointwise limit of a uniformly bounded sequence in $C_b(H)$. Fix $r$, $s$ and $t$. We claim that for any square integrable random variable $\eta(\omega)$ which is $\mathcal{F}_s$ measurable, we have
    \begin{equation}\label{equation: proof of Markov 1}
        \mathbb{E}\left( \varphi(X(s,\eta,t))|\mathcal{F}_s\right)=P_{s,t}\varphi(\eta(\omega)) \qquad \mathbb{P}-\mathrm{almost\,\,sure}.
    \end{equation}
    We first prove the claim for the case that $\eta$ has a simple form $\eta=\sum y_k \chi_{A_k}$, where $y_k\in H$ and $A_k\in\mathcal{F}_s$ form a partition of $\Omega$. We have
    \begin{eqnarray*}
        \mathbb{E}\left( \varphi(X(s,\eta,t))|\mathcal{F}_s\right)&=&\mathbb{E}\left( \sum \varphi(X(s,y_k,t)) \chi_{A_k}\big|\mathcal{F}_s\right)\\
        &=& \sum \chi_{A_k} \mathbb{E}\left( \varphi(X(s,y_k,t))|\mathcal{F}_s\right).
    \end{eqnarray*}
    Note that $X(s,y_k,t)$ is independent of $\mathcal{F}_s$, hence
    \begin{eqnarray*}
        &=& \sum \chi_{A_k} \mathbb{E}\left(\varphi(X(s,y_k,t))\right)\\
        &=& \sum \chi_{A_k} P_{s,t}\varphi(y_k)=P_{s,t}\varphi(\eta(\omega)).
    \end{eqnarray*}
    Now for general $\eta$ choose a sequence $\eta_n$ of simple random variables such that tend to $\eta$ in $L^2(\Omega)$ and almost surely. We then have
    \[ \mathbb{E}\left( \varphi(X(s,\eta_n,t))|\mathcal{F}_s\right)=P_{s,t}\varphi(\eta_n(\omega)) \qquad \mathbb{P}-\mathrm{almost\,\,sure}. \]
    Now let $n\to \infty$. By continuity with respect to initial conditions, the left hand side converges to $\mathbb{E}\left( \varphi(X(s,\eta,t))|\mathcal{F}_s\right)$ and the right hand side converges to $P_{s,t}\varphi(\eta(\omega))$ and~\eqref{equation: proof of Markov 1} follows. Now in~\eqref{equation: proof of Markov 1} let $\eta(\omega)=X(r,x,s)$. By uniqueness of solution we have $X(r,x,t)=X(s,X(r,x,s),t)$ and the theorem follows.
\end{proof}

\section{Some Examples}\label{section:examples}

In this section we provide some concrete examples of semilinear stochastic evolution equations with monotone nonlinearity and L\'evy noise. The examples consist of a stochastic delay differential equation and stochastic partial differential equations of parabolic and hyperbolic type.

\begin{example}[Stochastic Delay Equations]
    Consider the following delay differential equation in $\mathbb{R}$,
    \begin{equation}\label{equation: example_delay}
        \left\{\begin{array}{ll}
            dx(t)=&\left( \int_{-h}^0 \mu (d\theta)x(t+\theta) \right)dt + f(x(t))dt + g(x(t))dW_t + k(x(t)) dZ_t \\
            x(\theta)=& \psi(\theta), \theta \in (-h,0].
          \end{array} \right.
    \end{equation}
    where $h>0$, $\mu$ is a measure on $(-h,0]$ with finite variation, $W_t$ is a standard Wiener process in $\mathbb{R}$, $Z_t$ is a pure jump L\'evy martingale in $\mathbb{R}$ and $\psi(\theta)\in L^2((-h,0])$. Moreover assume that,

    \begin{hypothesis}\label{hypothesis: delay}
        \begin{description}

            \item[(a)] $f:\mathbb{R} \to \mathbb{R}$ is continuous and there exists a constant $M$ such that for any $a<b$,
                \[ f(a)-f(b) \le M (a-b),\]

            \item[(b)] $g:\mathbb{R} \to \mathbb{R}$ and $k:\mathbb{R} \to \mathbb{R}$ are Lipschitz.

            \item[(c)] There exists a constant $D$ such that for $a\in \mathbb{R}$,
                \[ | f(a)|^2 + | g(a)|^2 + |k(a)|^2 \le D (1+a^2).\]

        \end{description}
    \end{hypothesis}

    \begin{remark}
        Peszat and Zabczyk~\cite{Peszat-Zabczyk} have studied this delay differential equation with Lipschitz coefficients. We have replaced Lipschitzness of $f$ by the weaker assumption of semimonotonicity.
    \end{remark}

    Let $H=\mathbb{R}\times L^2((-h,0])$ and define the operator $A$ on $H$ by
        \[ A \left( {\begin{array}{c} u \\ v \end{array}} \right) = \left( {\begin{array}{c}  \int_{-h}^0 v(\theta) \mu(d\theta) \\ \frac{\partial v}{\partial \theta} \end{array}} \right). \]
    According to Da Prato and Zabczyk~\cite{DaPrato_Zabczyk_book}, Proposition A.25, the operator $A$ with domain
        \[ D(A)=\left\{ \left( {\begin{array}{c} u \\ v \end{array}} \right) \in H : v\in W^{1,2}(-h,0), v(0)=u \right\} \]
    generates a $C_0$ semigroup $S_t$ on $H$. Let $K=E=\mathbb{R}$ and let $\tilde{N}$ be the compensated Poisson random measure associated with $Z_t$. Define for $\left( {\begin{array}{c} u \\ v \end{array}} \right)\in H$ and $\xi\in \mathbb{R}$,
        \[ \bar{f}(u,v)= \left( {\begin{array}{c} f(u) \\ 0 \end{array}} \right), \bar{g}(u,v)= \left( {\begin{array}{c} g(u) \\ 0 \end{array}} \right), \bar{k}(\xi,u,v)= \left( {\begin{array}{c} \xi k(u) \\ 0 \end{array}} \right).\]
    It is easy to verify that $\bar{f}$, $\bar{g}$ and $\bar{k}$ satisfy Hypothesis~\ref{main_hypothesis}. Now, if we let
        \[X(t)= \left( {\begin{array}{c} x(t)\\ x_t \end{array}} \right) \]
    where $x_t(\theta)=x(t+\theta)$ for $\theta\in (-h,0]$, then equation~\eqref{equation: example_delay} can be written as
        \[dX(t)=A X(t) dt+ \bar{f}(X(t))dt+ \bar{g}(X(t^-))dW_t + \int_E \bar{k}(\xi, X(t^-)) \tilde{N}(dt,d\xi) \]
    with initial condition
        \[ X(0)= \left( {\begin{array}{c} \psi(0) \\ \psi \end{array}} \right) \]
    and hence by Theorem~\ref{theorem:existence and uniqueness} has a unique mild solution $x(t,\omega)$ with c\`adl\`ag trajectories and the solution depends continuously on initial condition.
\end{example}

\begin{example}[Stochastic Parabolic Equations with Finite Dimensional Noise] \label{example: finite_diemnsional_noise}
    In this example we consider a SPDE with L\'evy noise. The semimonotonicity assumption translates here to a simple assumption stated in Hypothesis~\ref{hypothesis: finite_dimensional}-(b) which includes, as special case, decreasing functions.

    Let $\mathcal{D}$ be a bounded domain with a smooth boundary in $\mathbb{R}^d$, and let $A$ be a self adjoint second order partial differential operator with smooth coefficients which is uniformly elliptic on $\mathcal{D}$. Let $B$ be the operator $B=d(x)\mathcal{D}_N+e(x)$, where $\mathcal{D}_N$ is the normal derivative on $\partial \mathcal{D}$ and $d$ and $e$ are in $C^\infty(\partial \mathcal{D})$.

    Consider the initial boundary value problem,
    \begin{equation}\label{equation: example_finite_dimentioanl_noise}
        \left\{\begin{array}{lrll}
            \frac{\partial u}{\partial t}  = Au + f(x,u(t,x))& + \sum\limits_{i=1}^m g_i(x,u(t^-,x)) \frac{\partial W_i}{\partial t}\\
            &+ \sum\limits_{j=1}^n  k_j(x,u(t^-,x)) \frac{\partial Z_j}{\partial t}& \textrm{on} & [0,\infty) \times \mathcal{D}\\
            Bu =0 && \textrm{on} & [0,\infty) \times \partial \mathcal{D}\\
            u(0,x)  =u_0(x) && \textrm{on} & \mathcal{D}.
        \end{array} \right.
    \end{equation}
    where $W_i(t), i=1,\ldots ,m$ are standard Wiener processes in $\mathbb{R}$ and $Z_j(t), j=1,\ldots ,n$ are pure jump L\'evy martingales in $\mathbb{R}$ with intensity measures $\nu_j(d\xi)$ and $u_0(x)\in L^2(\mathcal{D})$. Note that we have $\int_\mathbb{R} \xi^2\nu(d\xi) < \infty$. We assume moreover,
    \begin{hypothesis}\label{hypothesis: finite_dimensional}
        \begin{description}

            \item[(a)] $f(x,a):\mathcal{D} \times \mathbb{R} \to \mathbb{R}$, $g_i(x,a):\mathcal{D} \times \mathbb{R} \to \mathbb{R}$ and $k_j(x,a):\mathcal{D}\times \mathbb{R} \to \mathbb{R}$ satisfy Caratheodory condition, i.e. they are continuous with respect to $u$ for almost all $x\in \mathcal{D}$ and are measurable with respect to $x$ for all values of $u$.

            \item[(b)] There exists a real constant $M$ such that for any $x\in \mathcal{D}$ and real numbers $a < b$,
                \[ f(x,a)-f(x,b) \le M (a-b).\]

            \item[(c)] There exists a constant $C>0$ such that for any $x\in \mathcal{D}$ and $a,b\in\mathbb{R}$,
                \[ \sum\limits_{i=1}^m |g_i(x,a)-g_i(x,b)|^2 + \sum\limits_{j=1}^n |k_j(x,a)-k_j(x,b)|^2 \le C |a-b|^2.\]

            \item[(d)] There exists a function $\psi(x) \in L^2(\mathcal{D})$ and a constant $D>0$ such that for any $x\in \mathcal{D}$ and $a\in\mathbb{R}$,
                \[ |f(x,a)| + \sum\limits_{i=1}^m |g_i(x,a)| + \sum\limits_{j=1}^n |k_j(x,a)| \le \psi(x) + D |a|.\]
        \end{description}
    \end{hypothesis}

    Let $H=L^2(\mathcal{D})$. The operator $A$ together with the boundary condition generates a contraction semigroup $S_t$ on $H$ (Peszat and Zabczyk~\cite{Peszat-Zabczyk} section B.2). Define for $u(x)\in L^2(\mathcal{D})$,
        \[ \bar{f}(u)(x)=f(x,u(x))\]
        \[ \bar{g}_i(u)(x)=g_i(x,u(x))\]
        \[ \bar{k}_j(u)(x)=k_j(x,u(x))\]

    Since $f$, $g_i$ and $k_j$ satisfy Hypothesis~\ref{hypothesis: finite_dimensional}-(a) and~\ref{hypothesis: finite_dimensional}-(d), then by Theorem (2.1) of Krasnosel'ski\u\i~\cite{Krasnoelskii}, $\bar{f}$, $\bar{g}_i$ and $\bar{k}_j$ define continuous operators from $L^2(\mathcal{D})$ to $L^2(\mathcal{D})$ and for a suitable constant $D^\prime$ satisfy
        \[ \|\bar{f}(u)\| + \|\bar{g}_i(u)\| + \|\bar{k}_j(u)\| \le D^\prime (1+\|u\|) \]

    Define $\bar{g}:L^2(\mathcal{D})\to L^2(\mathcal{D})^m$ and $\bar{k}:\mathbb{R}^n \times L^2(\mathcal{D})\to L^2(\mathcal{D})^n$ by
        \[ \bar{g}=(\bar{g}_1,\ldots,\bar{g}_m) \]
        \[ \bar{k}(\xi,u)=(\xi_1 \bar{k}_1(u),\ldots,\xi_n \bar{k}_n(u)). \]
    Now, it is straightforward to verify that $\bar{f}$, $\bar{g}$ and $\bar{k}$ satisfy Hypothesis~\ref{main_hypothesis}.

    Let $W(t)=(W_1(t),\ldots,W_m(t))$ be a Wiener process on $K=\mathbb{R}^m$ and let $E=\mathbb{R}^n$ and $\tilde{N}(dt,d\xi)$ be the compensated Poisson random measure associated with the L\'evy process $Z(t)=(Z_1(t),\ldots,Z_n(t))$. Now we can write~\eqref{equation: example_finite_dimentioanl_noise} in the form of equation~\eqref{main_equation},
        \[ du(t) =Au(t) dt + \bar{f}(u(t)) dt + \bar{g}(u(t^-)) dW_t + \int_E \bar{k}(\xi,u(t^-)) \tilde{N}(dt,d\xi) \]
    with initial condition $u_0$, and hence equation~\eqref{equation: example_finite_dimentioanl_noise} has a unique mild solution $u(t,x,\omega)$ with values in $L^2(\mathcal{D})$ and with c\`adl\`ag trajectories. The solution also depends continuously on initial condition.
\end{example}

\begin{example}[Stochastic Parabolic Equations with Space-Time Noise] \label{example:general_parabolic}
    In this example we would like to consider a SPDE with infinite dimensional noise. A natural candidate for infinite dimensional noise is space-time white noise, but it can be shown that in dimensions greater than one, even the equation
        \[ \frac{\partial u}{\partial t}(t,x) = \Delta u(t,x) + \dot{W} (t,x) \]
    does not have a function valued solution (\cite{Peszat-Zabczyk}, Remark 12.2). In order to guarantee the existence of solution we assume that coefficients are operators on certain function spaces.

    Let $\mathcal{D}$ and $A$ be as in Example~\ref{example: finite_diemnsional_noise}. Consider the initial boundary value problem
    \begin{equation}\label{equation: example_parabolic}
        \left\{\begin{array}{lrll}
             \frac{\partial u}{\partial t} =& Au + f(u(t)) + g(u(t^-)) \frac{\partial W}{\partial t} & & \\
             &+ k(u(t^-)) \frac{\partial Z}{\partial t} & \textrm{on} & [0,\infty) \times \mathcal{D}  \\
             u=0 && \textrm{on} &  [0,\infty) \times \partial \mathcal{D} \\
             u(0,x)=0 && \textrm{on} & \mathcal{D}
          \end{array} \right.
    \end{equation}
    where $W_t$ is a cylindrical Wiener process on $L^2(\mathcal{D})$ and $Z_t$ is a pure jump L\'evy martingale on $L^2(\mathcal{D})$, and by $u(t)$ we mean $u(t,.)$.

    Let $n$ be an integer. We wish to solve this equation in the function space $H_n$ introduced in Walsh~\cite{Walsh}. Let $\{\phi_j\}$ be the complete orthonormal basis for $L^2(\mathcal{D})$ consisting of eigenfunctions of $A$ with Dirichlet boundary condition and $-\lambda_j<0$ be the corresponding eigenvalues. Let $H_n$ be the Hilbert space that has as a complete orthonormal basis the set $\{e_j=(1+\lambda_j)^{-\frac{n}{2}} \phi_j\}$. Obviously $H_0=L^2(\mathcal{D})$ and the spaces $H_n$ can be continuously embedded in each other as
        \[ \cdots \subset H_n \subset \cdots \subset H_1 \subset L^2(\mathcal{D}) \subset H_{-1} \subset \cdots \subset H_{-n} \subset \cdots. \]

    Assume moreover,

    \begin{hypothesis}\label{hypothesis: space-time noise parabolic}
        \begin{description}

            \item[(a)] $f:H_n \to H_n$ is measurable, demicontinuous and there exists a constant $M$ such that for any $u,v \in H_n$,
                \[ \langle f(u)-f(v),u-v \rangle \le M \|u-v\|^2,\]

            \item[(b)] $g:H_n \to L_{HS}(L^2(\mathcal{D}),H_n)$ and $k:H_n \to L(L^2(\mathcal{D}),H_n)$ are Lipschitz.

            \item[(c)] There exists a constant $D$ such that for $u\in H_n$,
                \[ \| f(u)\|^2 + \| g(u)\|^2 + \|k(u)\|^2 \le D (1+\|u\|^2),\]

        \end{description}
    \end{hypothesis}

    $A$ generates a $C_0$ semigroup $S_t$ on $H$ where $S_t e_j = e^{-t\lambda_j} e_j$. Let $K=E=L^2(\mathcal{D})$ and let $\tilde{N}(dt,d\xi)$ be the compensated Poisson random measure on $E$ corresponding to the L\'evy process $Z_t$ with intensity measure $\nu(d\xi)$, and define
        \[ \bar{k}(\xi,u):= k(u)(\xi) \]
    Now, it is easy to verify that $f$, $g$ and $\bar{k}$ satisfy Hypothesis~\ref{main_hypothesis} and therefore equation~\eqref{equation: example_parabolic} can be written in the form of equation~\eqref{main_equation} with initial condition $0$ and hence~\eqref{equation: example_parabolic} has a mild solution $u(t,x,\omega)$ with values in $H_n$ and with c\`adl\`ag trajectories.

    \begin{remark}
        In Hypothesis~\ref{hypothesis: space-time noise parabolic}-(b) one can replace the condition on $g$ by
            \[ g:H_n \to L(W^{-p,2}(\mathcal{D}),H_n) \]
        where $p>\frac{d}{2}$ is a real number, since the embedding $L^2(\mathcal{D})\hookrightarrow W^{-p,2}(\mathcal{D})$ is Hilbert-Schmidt (see Walsh~\cite{Walsh} page 334).
    \end{remark}
\end{example}

\begin{example}[Second Order Stochastic Hyperbolic Equations] \label{example: second_order_hyperbolic}
    In this example we consider a second order hyperbolic SPDE with L\'evy noise. The semimonotonicity condition on the drift coefficient translates here to being semimonotone with respect to the second variable and being Lipschitz with respect to the first variable. Let $\mathcal{D}$, $A$, $W_t$ and $Z_t$ be as in Example~\ref{example:general_parabolic}.

    Consider the following second order equation in $\mathcal{D}$:
    \begin{equation}\label{equation: example_hyperbolic}
        \left\{\begin{array}{lrll}
             \frac{\partial^2}{\partial t^2} u(t,x) = & Au + f(u(t),\frac{\partial u}{\partial t}) + g(u(t^-)) \frac{\partial W}{\partial t} & & \\
             & + k(u(t^-)) \frac{\partial Z}{\partial t}  & \textrm{on} &  [0,\infty) \times \mathcal{D} \\
             u=0 && \textrm{on} &  [0,\infty) \times \partial \mathcal{D} \\
             u(0,x)=0 && \textrm{on} & \mathcal{D} \\
            \frac{\partial u}{\partial t} (0,x)=0 && \textrm{on} & \mathcal{D}.
          \end{array} \right.
    \end{equation}

    Let $n$ be an integer. We wish to solve this equation in the function space $H_{n+1}$ introduced in Example~\ref{example:general_parabolic}. Assume that,

    \begin{hypothesis}\label{hypothesis: Second Order Hyperbolic}
        \begin{description}

            \item[(a)] $f:H_{n+1}\times H_n \to H_n$ is measurable, demicontinuous and there exists constants $M$ and $C$ such that for any $u,u_1,u_2 \in H_{n+1}, v,v_1,v_2\in H_n$,
                \[ \langle f(u,v_1)-f(u,v_2),v_1-v_2 \rangle \le M \|v_1-v_2\|^2, \]
                \[ \|f(u_1,v)-f(u_2,v)\|\le C \|u_1-u_2\|. \]

            \item[(b)] $g:H_{n+1} \to L_{HS}(L^2(\mathcal{D}), H_n)$ and $k:H_{n+1} \to L(L^2(\mathcal{D}), H_n)$ are Lipschitz.

            \item[(c)] There exists a constant $D$ such that for $u \in H_{n+1}$, and $v \in H_n$
                \[ \| f(u,v)\|^2 + \| g(u)\|^2 + \|k(u)\|^2 \le D (1+\|u\|^2+\|v\|^2).\]

        \end{description}
    \end{hypothesis}

    Let $H=H_{n+1}\times H_n$. Note that $A$ is self adjoint and negative definite on $H_n$. Moreover, we have
        \[ D((-A)^\frac{1}{2})=  H_{n+1}. \]
    Hence by Lemma B.3 of~\cite{Peszat-Zabczyk}, the operator
        \[ \mathcal{A}=\left( {\begin{array}{cc} 0&I\\ A&0 \end{array}} \right)\]
    generates a $C_0$ semigroup of contractions on $H$. Let $K=E=L^2(\mathcal{D})$. We also define
        \[ \bar{f}(u,v)= \left( {\begin{array}{c} 0\\ f(u,v) \end{array}} \right), \bar{g}(u,v)(\phi)= \left( {\begin{array}{c} 0\\ g(u)(\phi) \end{array}} \right), \bar{k}(\xi,u,v)= \left( {\begin{array}{c} 0\\ k(u)(\xi) \end{array}} \right) \]
    We claim that $\bar{f}$, $\bar{g}$ and $\bar{k}$ satisfy Hypothesis~\ref{main_hypothesis}. We show the semimonotonicity condition, the other conditions are straightforward.
        \begin{eqnarray*}
            \langle \bar{f}(u_1,v_1) - \bar{f}(u_2,v_2) , \left( {\begin{array}{c} u_1\\ v_1 \end{array}} \right) - \left( {\begin{array}{c} u_2\\ v_2 \end{array}} \right) \rangle = \langle \bar{f}(u_1,v_1) - \bar{f}(u_2,v_2) , v_1-v_2 \rangle &&\\
            = \langle \bar{f}(u_1,v_1) - \bar{f}(u_1,v_2) , v_1-v_2 \rangle + \langle \bar{f}(u_1,v_2) - \bar{f}(u_2,v_2) , v_1-v_2 \rangle &&
        \end{eqnarray*}
    where by Hypothesis~\ref{hypothesis: Second Order Hyperbolic}-(a) and Shwartz inequality
        \begin{eqnarray*}
            \le M \|v_1-v_2\|^2 + C \|u_1-u_2\| \|v_1-v_2\| \le (M+C)\left( \|u_1-u_2\|^2 + \|v_1-v_2\|^2 \right)
        \end{eqnarray*}
    Hence Hypothesis~\ref{main_hypothesis}-(a) holds with constant $M+C$. Now, if we let
        \[X(t)= \left( {\begin{array}{c} u(t)\\ \frac{\partial u}{\partial t}(t) \end{array}} \right) \]
    then equation~\eqref{equation: example_hyperbolic} can be written as
        \[dX(t)=\mathcal{A}X(t) dt+ \bar{f}(X(t))dt+ \bar{g}(X(t^-))dW_t + \int_E \bar{k}(\xi,X(t^-)) \tilde{N}(dt,d\xi) \]
    with initial condition $0$ and hence by Theorem~\ref{theorem:existence and uniqueness} has a mild solution $u(t,x,\omega)$ with values in $H_{n+1}$ and with c\`adl\`ag trajectories.

\begin{remark}
    One can generalize equation~\eqref{equation: example_hyperbolic} by assuming that coefficients $g$ and $k$ depend moreover on $\frac{\partial u}{\partial t}$. It suffices to modify the domain of $g$ and $k$ to $H_{n+1}\times H_n$ and then the same arguments hold.
\end{remark}

\end{example}

\end{document}